\newtheorem{theorem}{Theorem}[section]
\newtheorem{proposition}[theorem]{Proposition}
\newtheorem{definition}[theorem]{Definition}
\newtheorem{corollary}[theorem]{Corollary}
\newtheorem{lemma}[theorem]{Lemma}
\numberwithin{equation}{section}
\theoremstyle{remark}
\newtheorem{remark}[theorem]{Remark}
\newcommand{\R}{\mathbb{R}}
\newcommand{\C}{\mathbb{C}}
\newcommand{\D}{\mathbb{D}}
\newcommand{\Up}{\mathbb{H}}
\newcommand{\dd}{\mathrm{d}}
\newcommand{\magenta}{\textcolor{magenta}}
\begin{document}
\title{\bf{Classification of homogeneous Willmore surfaces in $S^n$}}
\author{Josef Dorfmeister,   Peng Wang }
\address{Fakult\" at f\" ur Mathematik, TU-M\" unchen, Boltzmann str. 3, D-85747, Garching, Germany.}
\email{dorfm@ma.tum.de}
\address{College of Mathematics and Informatics, FJKLMAA, Fujian Normal University, Fuzhou 350117, P. R. China.}

\address{School of Mathematical Sciences, Tongji University, Shanghai 200092, P. R. China.} \email{netwangpeng@hotmail.com, netwangpeng@tongji.edu.cn}

\maketitle

\begin{abstract} In this note we consider homogeneous Willmore surfaces in $S^{n+2}$. The main result is that a  homogeneous Willmore two-sphere is conformally equivalent to a homogeneous minimal two-sphere in $S^{n+2}$, i.e., either a round two-sphere or one of the Bor\r{u}vka-Veronese 2-spheres in $S^{2m}$. This entails a classification of all Willmore $\C P^1$ in $S^{2m}$. As a second main result we show that there exists no homogeneous Willmore upper-half plane in $S^{n+2}$ and we give, in terms of  special constant potentials, a simple loop group characterization of all homogeneous surfaces which have an abelian transitive group.
\end{abstract}

{\bf Keywords:} Homogeneous Willmore surfaces; Bor\r{u}vka-Veronese 2-spheres.\\

MSC(2010): 58E20; 53C43;  	53A30;  	53C35

\section{Introduction}

Homogeneous Willmore surfaces are the simplest ones among all Willmore surfaces.
Ejiri \cite{Ejiri1982} constructed the first homogeneous Willmore torus which is not conformally equivalent to any minimal surface in space forms. In \cite{J-L}, homogeneous  minimal tori in $S^{2n+1}$ and in $\mathbb C P^n$ are discussed systematically.
So far
there has not been any  systematic discussion of homogeneous Willmore surfaces. Very recently homogeneous Willmore 2-spheres have been studied by Pedit, Ma and Wang in terms of a variational method \cite{PeMaWa}. In this note we  provide a description of all homogeneous Willmore surfaces in spheres in terms of the loop group theory developed in \cite{DW1}. As an application, we derive a classification of homogeneous Willmore 2-spheres in $S^{n+2}$. They turn out to be exactly the only homogeneous minimal 2-spheres. Using the loop group theory, it is also easy to derive a characterization of homogeneous Willmore complex planes (including tori) in terms of potentials. Note there exist examples of homogeneous Willmore tori and planes which are not minimal in any space
form, which is different from the 2-sphere. See e.g. \cite{DW1}, \cite{Ejiri1982} and \cite{Li-V}. Moreover, we also show that there exists no homogeneous Willmore upper half plane.

The paper is organized as follows: Section 2 is a collection of basic results on homogeneous Riemann surfaces. In Section 3 we derive a description of homogeneous Willmore planes with abelian group action. Then we recall some basic results on the loop group theory of Willmore surfaces in Section 4. Section 5 provides a non-existence theorem of homogeneous Willmore upper half plane. Then we end this paper with a proof of the classification of  homogeneous Willmore 2-spheres in spheres.

The present paper was basically finished in 2012. It was finalized after the second named author had written separately a note on a partial result of this paper.


\section{Basic Results}

\subsection{Introductory definitions and results}

The notion of a homogeneous Willmore surface can be given in several ways, We adopt here the most direct one. Note that for a Riemannian manifold we denote by $Conf(M)$ the group of conformal diffeomorphisms.

\begin{definition}Let $X = H/H_0$ be a  connected homogeneous Riemann surface with a connected Lie group $H$ and a closed Lie subgroup $H_0$  of bi-holomorphic maps of $X$ and let $y: X \rightarrow S^m$ be a Willmore immersion. Then $y$ is called ``homogeneous" relative to $H$, if there exists a (continuous) homomorphism $R:H \rightarrow Conf(S^m)$ such that   $y(h.p) = R(h) y (p)$ for all $p \in X$ and all $h \in H$.
\end{definition}

Since $X = H/H_0$ is a two-dimensional real manifold with a Riemannian metric $g$ and a transitive conformal action of the connected group $H$, we can assume that either
$X \cong \R P^2$ or $X = H/H_0$ is a Riemann surface and $H$ a transitive group of bi-holomorphic maps.

From the classification of homogeneous Riemann surfaces we obtain:

\begin{enumerate}
\item The universal cover $\tilde{X}$ of X is either the unit sphere $S^2$, the unit disk
$\mathbb{E}$ (which we will frequently replace by  the  bi-holomorphically equivalent  upper half-plane $\mathbb{H}$),
or the whole complex plane.

\item the three conformal types of cylinders,
$\C^* = \C \setminus \{0\}$,  $\mathbb{E} \setminus {0}$ and $\mathbb{A}_{a,b}$,  where $\mathbb{A}_{a,b}$  denotes the
annulus $0 < a < |z| < b.$

\item the tori  $\mathbb{T} \cong \C/\mathcal{L}$, where $\mathcal{L}$ is a rank two lattice in $\C$.
\end{enumerate}

\begin{remark}\
\begin{enumerate}
 \item The cylinders above can be realized naturally in the form
$\C / \mathbb{Z}$, $\mathbb{H}/ T$ and $\mathbb{H}/ M$ respectively, where  $\mathbb{Z}$ means translations by integers in $x-$direction, and $T$ and $M$ are the discrete subgroups of $SL(2,\R)$  generated by  the matrices
\begin{equation}
t_0 = \left(
\begin{array}{cc}
1 & b\\
0&1\\
\end{array}
\right)
\hspace{2mm} \mbox{and} \hspace{2mm}
m_0 = \left(
\begin{array}{cc}
c & 0\\
0&c^{-1}\\
\end{array}
\right)
\end{equation}
\item Tori  consist of many conformal equivalence classes.

\end{enumerate}
\end{remark}
  It turns out that for each of the spaces listed above any  (connected) transitive Lie group  of bi-holomorphic automorphisms contains a transitive subgroup of a special type. Below we list these groups. The result is well known.
\begin{theorem} \label{homRiem}
Let $X$ denote any of the homogeneous Riemann surfaces listed above.
\begin{enumerate}
\item If $X \cong S^2$, then each transitive Lie group of bi-holomorphic maps contains a conjugate of $SU(2)/{ \{\pm I \}}.$

\item If $X \cong \mathbb{H}$, then each transitive Lie  group of bi-holomorphic maps contains a conjugate of the group $\Delta$ of real upper triangular matrices of determinant $1$ and positive diagonal elements.

\item If $X \cong \C$, then each transitive Lie group of bi-holomorphic maps contains the group of all translations.

\item If $X = \C^*$, then each transitive Lie group  of bi-holomorphic automorphisms contains  $\C^*$ acting by multiplication.

\item  If $X \cong \mathbb{T} \cong \C/\mathcal{L}$, then each transitive  Lie  group of bi-holomorphic maps contains the group of all translations.
\end{enumerate}
\end{theorem}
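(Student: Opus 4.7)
My plan is to treat each case by combining the description of $\operatorname{Aut}(X)$ with a classification of its connected Lie subgroups that can act transitively. Since $\dim_{\R}X=2$, any transitive $G$ satisfies $\dim G\ge 2$, and the evaluation $\mathfrak{g}\to T_{p}X$ must be surjective at every point $p$, not only at one.

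For case (1), I would apply Montgomery's theorem: any connected Lie group acting transitively on a compact manifold admits a transitive maximal compact subgroup. After conjugation in $PSL(2,\C)=\operatorname{Aut}(S^{2})$, such a maximal compact $K\subset G$ lies in $PSU(2)$, and transitivity on $S^{2}$ forces $\dim K=3$, so $K=PSU(2)=SU(2)/\{\pm I\}$. For case (2), $\operatorname{Aut}(\Up)=PSL(2,\R)$ is $3$-dimensional and connected, so either $G=PSL(2,\R)\supset\Delta$ or $\dim G=2$. Every $2$-dimensional real Lie algebra is either abelian or isomorphic to the $(ax+b)$-algebra; but a direct check shows that the centralizer in $\mathfrak{sl}(2,\R)$ of any nonzero element is one-dimensional, so there is no $2$-dimensional abelian subalgebra. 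Hence $\mathfrak{g}$ is the non-abelian one, which is conjugate in $\mathfrak{sl}(2,\R)$ to the Borel subalgebra, so $G$ is conjugate to $\Delta$ (the positive-diagonal condition reflecting that $G$ is connected).

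For case (3), write $\operatorname{Aut}(\C)=\C^{*}\ltimes\C$ with translation ideal $\mathfrak{t}\cong\C$ and scaling part $\mathfrak{s}\cong\C$, both of real dimension $2$. Transitivity at the origin forces the projection $\pi_{t}:\mathfrak{g}\to\mathfrak{t}$ to be surjective, since the isotropy of $0$ is exactly $\mathfrak{s}$. The main step is to show $\mathfrak{g}\cap\mathfrak{t}=\mathfrak{t}$. Using that the adjoint action of $\mathfrak{s}$ on $\mathfrak{t}$ is complex multiplication, the invariance condition $\pi_{s}(\mathfrak{g})\cdot(\mathfrak{g}\cap\mathfrak{t})\subseteq\mathfrak{g}\cap\mathfrak{t}$ forces $\pi_{s}(\mathfrak{g})$ to lie in a real line of $\C$ whenever $\dim(\mathfrak{g}\cap\mathfrak{t})=1$; combined with surjectivity of $\pi_{t}$ this pins $\dim\mathfrak{g}\le 2$, and a short computation with the vector field $(Sz+T)\partial_{z}$ then shows that such $\mathfrak{g}$, and likewise any $\mathfrak{g}$ with $\mathfrak{g}\cap\mathfrak{t}=0$, has a point $z\neq 0$ where the evaluation fails to be surjective, contradicting transitivity. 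Hence $\mathfrak{g}\supseteq\mathfrak{t}$ and $G$ contains all translations.

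Cases (4) and (5) are immediate: the identity component of $\operatorname{Aut}(\C^{*})$ is $\{z\mapsto az:a\in\C^{*}\}$ and that of $\operatorname{Aut}(\T)$ is the torus $\T$ acting by translations, both being $2$-real-dimensional; a connected transitive $G$ lies in the identity component and equals it by dimension. I expect the main obstacle to be case (3), namely excluding low-dimensional subalgebras that are transitive at the origin but not globally, where the complex-multiplication structure of the adjoint action of $\mathfrak{s}$ on $\mathfrak{t}$ provides the decisive input.
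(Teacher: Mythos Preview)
The paper does not actually prove this theorem; immediately before the statement it writes ``The result is well known'' and then moves on. So there is no argument in the paper to compare against, and your proposal supplies a proof where the authors simply cite folklore.

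Your argument is essentially correct and well organized. A few small points are worth tightening. In case~(1), after invoking Montgomery's theorem you assert that transitivity on $S^2$ forces $\dim K=3$; you should say explicitly why a $2$-dimensional compact connected group (necessarily a torus $T^2$) cannot act transitively on $S^2$---for instance because $S^2$ is simply connected while any homogeneous space of $T^2$ has infinite fundamental group, or by comparing Euler characteristics. In case~(3), your treatment of the subcase $\mathfrak g\cap\mathfrak t=0$ is a bit compressed: injectivity of $\pi_s$ together with surjectivity of $\pi_t$ forces $\dim\mathfrak g=2$, so $\mathfrak g$ is the graph of an $\R$-linear map $\phi:\mathfrak t\to\mathfrak s$; closure under the bracket and injectivity of $\phi$ then force $\phi(T)=cT$ for some fixed $c\in\C^*$, and every vector field in $\mathfrak g$ vanishes at $z=-1/c$, contradicting transitivity. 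With these clarifications your proof is a clean and self-contained justification of a statement the paper takes for granted.
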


The original question of dealing with conformal transformations has turned a question of dealing with bi-holomorphic automorphisms of Riemann surfaces.
In particular, if $X = H/H_0$ is a Riemann surface on which the  connected  group $H$ acts bi-holomorphically and  transitively and $y : X \rightarrow S^m$ is a homogeneous Willmore surface relative to $H$, then the natural projection $\tilde{\pi}: \tilde{M} \rightarrow M$ yields a Willmore immersion $\tilde{y} :  \tilde{M} \rightarrow S^m$, given by
$\tilde{y} = y \circ \tilde{\pi}$.
Moreover, there exists some  connected transitive group $\tilde{H}$ of bi-holomorphic automorphisms of
$\tilde{X}$ which acts transitively on $\tilde{X}$ and satisfies
 \[ \tilde{y} ( \tilde{\gamma}.z) = R(\tilde{\gamma}) \tilde{y} (z) \hbox{ for all $z \in \tilde{M}$ and all $\tilde{\gamma} \in \tilde{H}$.}\]
As a consequence we obtain:
\begin{theorem}
If $y:X = H/H_0 \rightarrow S^m$  is a homogeneous Willmore surface different from $S^2$
 and $\tilde{\pi}$ the natural projection from the universal cover $\tilde{X}$ to  $X$, then the  lift  $\tilde{y} = y \circ \tilde{\pi}$ is a homogeneous Willmore surface defined on $\tilde{M}$. In particular, $\tilde{y}$ satisfies
\begin{equation}
\tilde{y}(\tilde{\gamma}.z) = R(\gamma)\tilde{y}(z) \hbox{ for all $z \in \tilde{M}$ and all $\tilde{\gamma} \in \tilde{H}$,}
\end{equation}
and $R$ is a homomorphism of Lie groups.
\end{theorem}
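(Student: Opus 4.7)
The plan is to handle the two claims separately: first that $\tilde y$ is a Willmore immersion, then that it is homogeneous in the prescribed sense via a connected transitive Lie group $\tilde H\subset\mathrm{Aut}(\tilde X)$ together with a Lie group homomorphism $\tilde R:\tilde H\to Conf(S^m)$ furnishing the equivariance. The first claim is immediate: $\tilde\pi$ is a covering map between Riemann surfaces, hence a local biholomorphism, so the local, conformally invariant Willmore Euler--Lagrange equation for $y$ pulls back to the same equation for $\tilde y=y\circ\tilde\pi$.

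For the second claim, I would construct $\tilde H$ by lifting the infinitesimal action. Let $\mathfrak h$ be the Lie algebra of $H$, realised as a Lie algebra of complete holomorphic vector fields on $X$. Each such vector field lifts uniquely, via the local biholomorphism $\tilde\pi$, to a holomorphic vector field on $\tilde X$, producing a Lie algebra $\tilde{\mathfrak h}\cong\mathfrak h$ of holomorphic vector fields on $\tilde X$. These lifted fields are complete, because the global flows on $X$ lift to global flows on $\tilde X$ by the path-lifting property of $\tilde\pi$. Hence $\tilde{\mathfrak h}$ integrates to a connected Lie group $\tilde H$ of biholomorphisms of $\tilde X$.

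To see that $\tilde H$ acts transitively on $\tilde X$, I would use a clopen-orbit argument. At any point $\tilde p\in\tilde X$, the evaluations of $\tilde{\mathfrak h}$ span $T_{\tilde p}\tilde X$, because $\tilde\pi_{*}$ maps them onto $\mathfrak h(\tilde\pi(\tilde p))=T_{\tilde\pi(\tilde p)}X$ (using transitivity of $H$ on $X$) and $\tilde\pi$ is a local diffeomorphism. Thus each $\tilde H$-orbit in $\tilde X$ is open; since $\tilde X$ is connected and is partitioned into $\tilde H$-orbits, there is only one orbit. Moreover, since each vector field in $\tilde{\mathfrak h}$ is $\tilde\pi$-related to the corresponding field in $\mathfrak h$, the $\tilde H$-flows descend through $\tilde\pi$ to $H$-flows, giving a Lie group homomorphism $p:\tilde H\to H$ with $\tilde\pi(\tilde\gamma\cdot\tilde z)=p(\tilde\gamma)\cdot\tilde\pi(\tilde z)$. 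Defining $\tilde R:=R\circ p$, the equivariance identity then reads
\[
\tilde y(\tilde\gamma\cdot\tilde z)=y(\tilde\pi(\tilde\gamma\cdot\tilde z))=y(p(\tilde\gamma)\cdot\tilde\pi(\tilde z))=R(p(\tilde\gamma))\,y(\tilde\pi(\tilde z))=\tilde R(\tilde\gamma)\,\tilde y(\tilde z),
\]
which is precisely the required formula; $\tilde R$ is a homomorphism of Lie groups as a composition of two such.

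The only slightly delicate point I anticipate is the completeness of the lifted vector fields, needed to obtain a Lie group $\tilde H$ acting globally on $\tilde X$; this is handled uniformly by path-lifting for the covering $\tilde\pi$. Once this is settled, everything else follows by standard Lie-theoretic reasoning, and no explicit deck-transformation bookkeeping (of the sort hinted at in the discussion preceding the theorem) is required.
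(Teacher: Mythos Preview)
Your argument is correct and, in fact, supplies considerably more detail than the paper does: in the paper the theorem is asserted ``as a consequence'' of the preceding discussion, with no proof written out. The paper's implicit reasoning leans on the classification in Theorem~\ref{homRiem}: since $\tilde X$ is $\C$ or $\mathbb H$, one already knows an explicit transitive connected Lie group of biholomorphisms on $\tilde X$, and the equivariance then follows by pushing the action of $H$ through $\tilde\pi$. Your approach instead constructs $\tilde H$ intrinsically by lifting the infinitesimal $\mathfrak h$-action along the covering, proving completeness of the lifted fields via path-lifting, and then integrating. This is more self-contained (it does not depend on knowing the automorphism groups of $\C$ and $\mathbb H$ in advance) and would work verbatim in higher-dimensional settings.

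Two small points are worth making explicit. First, the step ``$\tilde{\mathfrak h}$ integrates to a connected Lie group $\tilde H$ of biholomorphisms'' is Palais' theorem: a finite-dimensional Lie algebra of complete vector fields generates a global Lie group action. You are using this implicitly; it deserves a citation or a one-line acknowledgment, since completeness of each individual field does not automatically give a group action. Second, for the homomorphism $p:\tilde H\to H$, you should note that every element of the connected group $\tilde H$ is a finite product of time-$t$ flows of lifted fields, each of which descends through $\tilde\pi$ by the $\tilde\pi$-relatedness you established; hence arbitrary elements of $\tilde H$ descend, and $p$ is well defined. With these clarifications your proof is complete.
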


\section{The case of universal cover $\C$}

In this section we deal with the  cases $(3)$ and   $(5)$   above. Let $H$ be a connected Lie subroup of the group of bi-holomorphic automorphisms of $\C$. Then  Theorem \ref{homRiem} implies that  in this case there always exists a connected abelian transitive Lie group.

\begin{theorem}\
\begin{enumerate}
\item
If $y : \C \rightarrow S^{n+2}$ is a homogeneous Willmore surface relative to a group $H$ of translations, then $y$ can be obtained from some constant Maurer-Cartan form $\alpha = \mathcal{A}(\lambda)\dd u + \mathcal{B}(\lambda)\dd v$ of the moving frame satisfying
\begin{equation}\label{eq-constantmc}
 [\mathcal{A}(\lambda),\mathcal{B}(\lambda)]=0.
\end{equation}
\item Conversely, if $\alpha = \mathcal{A}(\lambda)\dd u + \mathcal{B}(\lambda)\dd v$ has the same form as the Maurer-Cartan form in Proposition 2.2 of \cite{DW1} for each $\lambda\in S^1$, with $\mathcal{A}(\lambda)$ and $\mathcal{B}(\lambda)$ being constant in $(u,v)$ and satisfying \eqref{eq-constantmc},  then
\begin{equation}\label{eq-F-C}
F(z, \bar{z},\lambda ) = e^{u\mathcal{A}(\lambda) + v \mathcal{B}(\lambda)}
\end{equation}
is the extended frame of the conformal Gauss map of a homogeneous  Willmore immersion $y:\C \rightarrow S^{n+2}$. In particular, $y$ can be read off from $F(z, \bar{z},\lambda )$ by Proposition 2.2 of \cite{DW1}.
\end{enumerate}
\end{theorem}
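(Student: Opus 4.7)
The plan is to exploit the loop-group formalism recalled in Proposition 2.2 of \cite{DW1} to translate translational homogeneity into an algebraic condition on the Maurer-Cartan form of the extended frame.

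For part (1), I would begin by lifting the $\R^2$-action by translations to the extended frame. Let $F(z,\bar z,\lambda)$ denote the extended frame of the conformal Gauss map of $y$, normalized so that $F(0,0,\lambda)=I$. The homogeneity $y(z+w)=R(w)y(z)$ induces a lifted equivariance of the form $F(z+w,\bar z+\bar w,\lambda)=\hat R(w,\lambda)\, F(z,\bar z,\lambda)\, K(w,z,\lambda)$, where $\hat R$ is the loop-group lift of $R$ and $K$ takes values in the isotropy subgroup acting on frames. Evaluating at $(z,\bar z)=(0,0)$ and absorbing the gauge $K(w,0,\lambda)$ into the base-point normalization leads to $F(u,v,\lambda)=\hat R((u,v),\lambda)$, up to a right gauge that may be removed. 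Since translations commute, $\hat R$ is a homomorphism from the abelian group $\R^2$ into the relevant loop group, hence necessarily of the form $\hat R((u,v),\lambda)=e^{u\mathcal{A}(\lambda)+v\mathcal{B}(\lambda)}$ for commuting loops $\mathcal{A}(\lambda),\mathcal{B}(\lambda)$. Reading off $F^{-1}\dd F$ then yields $\alpha=\mathcal{A}(\lambda)\,\dd u+\mathcal{B}(\lambda)\,\dd v$ with the commutation relation \eqref{eq-constantmc}.

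For part (2), since $[\mathcal{A}(\lambda),\mathcal{B}(\lambda)]=0$, the exponential in \eqref{eq-F-C} is entire in $(u,v)\in\R^2$ and satisfies $F^{-1}\dd F=\mathcal{A}(\lambda)\,\dd u+\mathcal{B}(\lambda)\,\dd v$. By hypothesis this one-form has the shape prescribed by Proposition 2.2 of \cite{DW1} for every $\lambda\in S^1$, so $F$ is the extended frame of the conformal Gauss map of a globally defined Willmore immersion $y:\C\to S^{n+2}$, recovered from $F|_{\lambda=1}$ via that proposition. Homogeneity follows from the identity $F(u+u_0,v+v_0,\lambda)=e^{u_0\mathcal{A}(\lambda)+v_0\mathcal{B}(\lambda)}F(u,v,\lambda)$: evaluating at $\lambda=1$ exhibits a conformal transformation of $S^{n+2}$ sending $y(u,v)$ to $y(u+u_0,v+v_0)$, and the assignment $(u_0,v_0)\mapsto e^{u_0\mathcal{A}(1)+v_0\mathcal{B}(1)}$ is a continuous group homomorphism because $\mathcal{A}$ and $\mathcal{B}$ commute.

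The main obstacle is the gauge-fixing step in part (1): one must verify that the cocycle $K(w,z,\lambda)$ arising from the equivariance can genuinely be trivialized after a suitable normalization of $F$, so that $\hat R$ lands in a fixed loop group as a homomorphism rather than merely as a cocycle. This is standard for DPW-type constructions but requires invoking the Iwasawa splitting (or, equivalently, the uniqueness of the extended frame up to its built-in isotropy gauge) to single out the loop-group lift of $R$ unambiguously.
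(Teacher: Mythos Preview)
Your argument is correct and lands on the same conclusion as the paper, but the route differs in two small ways worth noting. First, the paper does not lift the translation action to the loop group at all: it argues only at $\lambda=1$, where equivariance gives $F(z+z_0,\lambda)|_{\lambda=1}=R(z_0)F(z,\lambda)|_{\lambda=1}$ and hence $F(u+iv,\lambda)|_{\lambda=1}=\exp(uE_1+vE_2)$; then it invokes Proposition~2.2 of \cite{DW1} to observe that the $\lambda$-coefficients of $\mathcal{A}(\lambda),\mathcal{B}(\lambda)$ are geometric quantities already encoded in the $\lambda=1$ frame, so constancy at $\lambda=1$ forces constancy for all $\lambda$. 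This sidesteps the loop-level cocycle issue you flag. Second, the paper derives $[\mathcal{A},\mathcal{B}]=0$ from the flatness (integrability) of $\alpha$ once constancy is known, whereas you obtain it from the abelianness of the translation group via the homomorphism $\hat R$; the two are of course equivalent. One minor slip: in your equivariance relation the gauge factor $K$ should be $\lambda$-independent (it lives in the fixed isotropy $K=SO^+(1,3)\times SO(n)$, not in the loop group), which is exactly what makes the ``absorb the gauge'' step legitimate. Your part~(2) matches the paper's verbatim.
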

\begin{proof} One can apply Proposition 2.2 of \cite{DW1} and obtain that the Maurer-Cartan form $\alpha$ of an extended frame $F(z,\lambda)$ of $y$ satisfying $F(0,\lambda)= I$ is of the form $\alpha = \mathcal{A}(\lambda)\dd u + \mathcal{B}(\lambda)\dd v$.  The integrability condition then yields that $\mathcal{A}(\lambda)$ and $\mathcal{B}(\lambda)$ commute. The  property of being equivariant relative to all translations  implies   $F(z+z_0,\lambda)|_{\lambda=1} = R(z_0) F(z,\lambda)|_{\lambda=1}$ for any $z_0\in\C$. As a consequence, one has
\[F(u+vi,\lambda)|_{\lambda=1} = \exp(uE_1 + vE_2)) F(0,\lambda)_{\lambda=1}=\exp(uE_1+vE_2) \]
for some $E_1$ and $E_2$. Hence $\mathcal A(\lambda)|_{\lambda=1}=E_1$ and $\mathcal B(\lambda)|_{\lambda=1}=E_2$. In particular, $\mathcal{A}(\lambda)$ and $\mathcal{B}(\lambda)$ are constant in $z$ and $\bar z$.

The converse part is also a straightforward application of Proposition 2.2 of \cite{DW1}. From the form of $F$ stated in \eqref{eq-F-C} above it follows that  $y$ is homogeneous.
\end{proof}

In the case just discussed one only needs to quote a result which involves the loop group technique for Willmore surfaces.
For the other cases we will need to involve the loop group technique in more detail.

\section{The loop group formalism}

From now on, let $y:X \rightarrow S^{n+2}$ denote a homogeneous Willmore surface.
Then with $\tilde{X}$ denoting the universal cover of $X$ we obtain a homogeneous Willmore immersion $\tilde{y}: \tilde{X}  \rightarrow S^{n+2}$.
We recall briefly from \cite{DW1} the basic facts about the loop group
approach to Willmore immersions.

If $\tilde{X}$ is non-compact, then there exists a smooth extended frame
$F(z, \bar{z},\lambda)$, the frame of the conformal Gauss    map $f$ of $\tilde{y}$.
Moreover, $f: \tilde{X} \rightarrow SO^+(1, n+3)/{SO^+(1,3) \times SO(n)}$
is an associated family of conformally harmonic maps, where $f=F\mod SO^+(1, n+3)/SO^+(1,3) \times SO(n)$ for some frame $F$ of $f.$
Every harmonic map from $\tilde{X}$ to some symmetric space can be derived from some holomorphic potential as well as from some normalized potential
(For notation see \cite{DW1}).

In the case of $X=S^2$ there exists some meromorphic potential, a meromorphic one form on $S^2,$ with the appropriate behaviour in $\lambda$
which induces the conformal Gauss map $f$ associated  with the Willmore immersion $y$.  Moreover, the monodromy  at each pole of the potential is trivial $(= \pm I)$.
For more details concerning this case see \cite{DW1,DW2}.

So far we have only discussed potentials for Willmore immersions from $X$
  (and its universal cover $\tilde{X}$)  to $S^{n+2}$. The assumed homogeneity imposes additional restrictions.
We have seen above that there exists some subgroup $H$ of the group of all conformal transformations of  $\tilde{X}$) which acts transitively on  $\tilde{X}$ and for which we have:
\begin{equation}
R y(z) = y(h(z)), \hspace{4mm} \mbox{ for all } \hspace{2mm} z \in  \tilde{X}, ~R\in H.
\end{equation}

This induces (see e.g. \cite{DoHa;Sym1,DoHa;Sym2,Do-Wa-sym} )  the relation
\begin{equation}
 \chi (R,\lambda) F(z, \bar{z}, \lambda)  = F(h(z), \overline{h (z)} , \lambda)
k(z, \bar{z}), \hspace{4mm} \mbox{ for all } \hspace{4mm} z \in  \tilde{X},
\end{equation}
where $R\in H$ is arbitrary and $h = h_R \in H$ is appropriately chosen and $k$ is a $\lambda-$independent matrix function with values in the stabilizer group $K = SO^+(1,3) \times SO(n)$.
 On the level of  potentials we consider on the universal cover  $\tilde{X}$ of $X$  a potential which is the pull-back of some one-form defined on $X$. Then the
fundamental group $\pi_1(X)$ acts on  $\tilde{X}$ and induces the following conditions
\begin{equation} \label{inv-h}
h^* \eta = \eta \hspace{3mm} \mbox{for all} \hspace{3mm} h \in \pi_1 (X).
\end{equation}

In the orientable case, this has some restrictive meaning only in the case of  cylinders and tori. Later a similar formula will apply to the only non-orientable
homogeneous case.

 {We will now continue to discuss the homogeneous Willmore surfaces according to what universal cover they have. We recall, that the case of $\tilde{X} = \C$ has already been settled above.}

\section{The case of $\tilde{X} = \mathbb{H}$}

In this case, we can assume w.l.g. that the group $H$ contains at least the group
 \begin{equation}H_1=\left\{\gamma=\left(
                           \begin{array}{cc}
a&b \\
0&a^{-1}
\end{array}
\right)
|a\in\R^+, b\in \R\right\},\hbox{
with
}
 \gamma.z = \frac{az + b}{0z + a^{-1}} = a^2 z + ab.
 \end{equation}
So we have
 \begin{equation}
 y(\gamma.z) = \chi(\gamma) y(z)
 \end{equation}
for $\gamma$. Set
\[\sigma_3 =  \left(
                           \begin{array}{cc}
1&0 \\
0&-1
\end{array}
\right)
\hspace{2mm}\mbox{and}\hspace{2mm}
\nu = \left(
                           \begin{array}{cc}0&1\\
0&0
\end{array}
\right).\]
We have
\[y( e^{t\sigma_3}.z) = e^{tB} y(z) \hspace{3mm} \mbox{and} \hspace{3mm}  y( e^{s \nu}.z) = e^{sD} y(z).\]
Here $B$ and $D$ are the images of $\sigma_3$ and $\nu$ by  the
 monodromy representation $\chi$ respectively.
 Note: we obtain
 \begin{equation} \label{comm}
 [B,D] = 2 D.
\end{equation}
It is straightforward to compute
\[(e^{ 1/2 \ln (v) \sigma_3} e^{v^{-1}u \nu}).i=z.\]
Altogether we thus obtain
\begin{equation}
y(z) = y(u +iv) = e^{ 1/2 \ln (v) B} e^{v^{-1}u D} y(i) =  e^{ u D} e^{1/2 \ln(v) B} y(i).
\end{equation}
Similarly we obtain for the frame (and equally well for the extended frame):
\begin{equation}
F(z) = F(u +iv) = e^{ 1/2 \ln (v) B} e^{v^{-1}u D} F(i) k(u,v) =  e^{ u D} e^{1/2 \ln(v) B} F(i) \hat{k}(u,v).
\end{equation}
 Now we apply what was mentioned at the end of the last section.
It is well known that one can classify, up to conjugation, which choices there exist for $B$ and $D$.
 In  formula (7) of \cite{PWZ},  it is shown that each non-abelian maximal solvable
  subalgebra of $so(1, p)$ is of the form $\mathfrak{s} = \R S \oplus \mathcal{N}$ with $S$ a semisimple matrix and $\mathcal{N}$ an abelian subalgebra  of nilpotent matrices  of dimension $p-1$ such that $[S,N] = N$ for all $N \in \mathcal{N}$.
Considering
\begin{equation} \label{solvable}
\mathcal{S} = \left\{\left(
                \begin{array}{ccc}
0& a &X^t \\
a& 0& -X^t \\
X &X & 0 \\
                             \end{array}
                           \right)\right\}.
\end{equation}
it is easy to verify the properties listed above for a maximal solvable subalgebra of $so(1,p)$.
Therefore $\mathcal{S}$ is maximal solvable.

 It follows that up to a conformal transformation of our original Willmore immersion we can assume that the monodromy group of our homogeneous Willmore upper half-plane has the form (\ref{solvable}).
In particular, $D$ has the form
\[\left(
                \begin{array}{ccc}
0&0 &X^t \\
0& 0& -X^t \\
X &X & 0 \\
                             \end{array}
                           \right).\]
On the other hand since $\exp(t\nu)$ is a translation along the $v-$axis, we can view $y$ as an equivariant surface along $v$ direction. So the corresponding monodromy gives a Delauney type matrix \cite{BK,DW-equ}. But a
                          Delauney type matrix can not be of the above form, since it is derived by the Maurer-Cartan form of $F(z,\lambda)$ \cite{DW-equ}.  Altogether we obtain
\begin{theorem}
There exists no homogeneous Willmore immersion from $\Up$ to $S^{n+2}$.
\end{theorem}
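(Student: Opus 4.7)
The plan is to convert the homogeneity of $y$ under the solvable group $H_1$ into a rigid structural constraint on the monodromy matrix $D = \chi(\nu)$, and then to contradict that constraint by means of the loop group theory of equivariant Willmore surfaces.

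First, I would use that the one-parameter subgroups $\{e^{t\sigma_3}\}$ and $\{e^{s\nu}\}$ together act transitively on $\Up$, so the homogeneity of $y$ already forces the closed form
\[
y(u+iv) = e^{uD}\, e^{\tfrac{1}{2}\ln(v)\, B}\, y(i),
\]
with $B = \chi(\sigma_3)$, $D = \chi(\nu)$, and the commutator relation $[B,D] = 2D$. In particular, $\R B \oplus \R D$ is a non-abelian two-dimensional solvable subalgebra of $so(1,n+3)$ obtained via the monodromy representation $\chi$.

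Next, I would invoke the classification of non-abelian maximal solvable subalgebras of $so(1,p)$ from \cite{PWZ}: each such subalgebra is conjugate to the explicit $\mathcal{S}$ of \eqref{solvable}, consisting of a semisimple generator together with an abelian nilpotent ideal $\mathcal{N}$ on which the generator acts with eigenvalue $1$. Applying this conjugation geometrically by a conformal transformation of $S^{n+2}$ (which preserves the Willmore property), I can arrange $B$ to be the semisimple element and $D$ to lie inside the nilpotent ideal, which forces
\[
D = \begin{pmatrix} 0 & 0 & X^{t} \\ 0 & 0 & -X^{t} \\ X & X & 0 \end{pmatrix}
\]
for some vector $X$.

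Finally, I would exploit that the flow $\{e^{s\nu}\}$ realizes a one-parameter group of conformal translations of $\Up$ in the real direction, so $y$ is in particular an equivariant Willmore immersion along a real line of $\Up$ with monodromy $e^{sD}$. The characterization in \cite{BK,DW-equ} tells us that the monodromy matrix of any such equivariant Willmore immersion is of Delaunay type, being derivable from the Maurer--Cartan form of the $\lambda$-dependent extended frame $F(z,\lambda)$ of \cite{DW1}. The main obstacle, and the heart of the argument, is to verify directly that a Delaunay-type matrix cannot be conjugated into the pure nilpotent shape of $D$ above, since Delaunay matrices always carry nontrivial components outside the nilpotent ideal $\mathcal{N} \subset \mathcal{S}$. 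This structural incompatibility is the required contradiction and rules out the existence of a homogeneous Willmore immersion $\Up \to S^{n+2}$.
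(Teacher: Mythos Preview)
Your proposal is correct and follows essentially the same route as the paper: set up $B=\chi(\sigma_3)$, $D=\chi(\nu)$ with $[B,D]=2D$, use the classification in \cite{PWZ} of maximal solvable subalgebras of $so(1,p)$ to force $D$ into the nilpotent form of \eqref{solvable}, and then contradict this with the Delaunay-type structure of the monodromy coming from the equivariance of $y$ along the real translation direction via \cite{BK,DW-equ}. The only difference is cosmetic: the paper states the incompatibility without elaboration, while you correctly flag the verification that a Delaunay matrix cannot be purely nilpotent as the crux.
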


\section{The case of $X = S^2$}


In this section, we will show that a homogeneous Willmore 2-sphere is conformally equivalent to either the round 2-sphere or one of the Bor\r{u}vka-Veronese 2-spheres in $S^{2m}$. The main idea is to use the representations of $\mathfrak{so}(3)$ in $\mathfrak{so}{^+(1,n+3)}$ and loop group theory to show   that a representation is irreducible and hence the orbit gives one of the Bor\r{u}vka-Veronese 2-spheres.

We  introduce our main result in  Section 6.1. Then we consider the monodromy matrices of $S^2$ and their representations in Section 6.2. Using these matrices, in Section 6.3 we obtain the normalized potentials of the homogeneous Willmore 2-spheres as well as a property of the matrices. With these preparations, we can show in Section 6.4 that  the representation is necessarily irreducible and obtain the  proof of our main result. Finally we also consider the antipodal symmetries of the surfaces and show whether they will reduce to $\R P^2$ or not.

\subsection{The main result}

\begin{theorem}\label{thm-s2}
 Let $y :M \rightarrow S^{n+2}$ be a homogeneous and full Willmore $S^2$, $n\geq1$.  Then  $n=2m-2$ and $y$ is conformally equivalent to the full  Bor\r{u}vka-Veronese 2-sphere in $S^{2m}$.
\end{theorem}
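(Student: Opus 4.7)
The plan is to combine the loop-group formalism of Section~4 with the representation theory of $\mathfrak{so}(3)$ inside $\mathfrak{so}^+(1,n+3)$. By Theorem~\ref{homRiem}(1) we may assume that the transitive group $H$ contains $SU(2)/\{\pm I\}\cong SO(3)$ acting on $S^2$ in the standard way, so the homogeneity furnishes a homomorphism $R:SO(3)\to Conf(S^{n+2})$ together with an equivariance relation
\[
\chi(R,\lambda)\,F(z,\bar z,\lambda)=F(h_R(z),\overline{h_R(z)},\lambda)\,k(z,\bar z,R)
\]
for the extended frame of the conformal Gauss map. Differentiating at the identity produces, for each $\lambda\in S^1$, a Lie-algebra homomorphism $\rho_\lambda:\mathfrak{so}(3)\to \mathfrak{so}^+(1,n+3)$. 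As a first step I would write down explicit monodromy matrices for a convenient set of generators of $SO(3)$, as foreshadowed in Section~6.2.

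Next I would use the $S^2$-specific part of \cite{DW1}: on $S^2$ there exists a meromorphic potential $\eta$ (a meromorphic one-form) producing the extended frame by the DPW recipe, with trivial monodromy at each pole. Transferring the $SO(3)$-equivariance to the potential level and evaluating at the fixed point of an isotropy $SO(2)$ forces $\eta$ to have a rigid form whose matrix coefficients are entirely determined by $\rho_\lambda(\mathfrak{so}(3))$. This is the content of Section~6.3, and it shows in particular that $y(S^2)$ coincides with the $\rho_\lambda$-orbit through the base point.

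The heart of the argument is then to prove that $\rho_\lambda$ acts irreducibly on $\mathbb{R}^{1,n+3}$. Decomposing the ambient space into $\rho_\lambda$-invariant subspaces, any nontrivial invariant summand containing the image of $y$ would force $y(S^2)$ into a proper totally geodesic subsphere of $S^{n+2}$, contradicting fullness; conversely, a trivial orthogonal summand would confine $y$ to a lower-dimensional affine subspace, again against fullness. The Willmore condition enters through the harmonicity of the conformal Gauss map, which rules out indefinite invariant summands and, together with the compactness of $SO(3)$ (so that $R(SO(3))$ is conjugate in $Conf(S^{n+2})$ to a subgroup of $SO(n+3)$), effectively reduces the analysis to ordinary orthogonal representation theory.

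Once irreducibility is established, the classification of real irreducible representations of $\mathfrak{so}(3)$ forces $n+3$ to be odd, i.e.\ $n=2m-2$, and identifies the representation with the symmetric $2m$-th power of the standard one. The $SO(3)$-orbit through a generic point of $S^{2m}$ is by construction the Bor\r{u}vka-Veronese 2-sphere of order $m$, which gives the conclusion up to conformal equivalence. The main obstacle I anticipate is the irreducibility step: one must carefully rule out reducible $\rho_\lambda$ while tracking the loop parameter, the indefinite signature of the ambient algebra, and the Willmore/harmonicity constraint imposed on $\eta$.
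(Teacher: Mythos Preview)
Your outline matches the paper's overall architecture—$SO(3)$-monodromy, passage to potentials, irreducibility, then the classical Bor\r{u}vka/Calabi identification—but the irreducibility step, which you yourself flag as the ``main obstacle'', is where the proposal has a genuine gap.

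Your argument that a trivial invariant summand contradicts fullness is fine. The problem is the other case. You write that ``any nontrivial invariant summand containing the image of $y$'' would force $y$ into a proper subsphere; but nothing you have said forces $y(S^2)$ to lie in a single summand. If $\mathbb{R}^{n+3}=V_1\oplus V_2$ with both $V_i$ nontrivial $SO(3)$-irreducibles and $y(0)=v_1+v_2$ has nonzero components in each, the orbit $SO(3)\cdot y(0)$ can perfectly well span all of $\mathbb{R}^{n+3}$, so fullness does not exclude this. Your remark that harmonicity ``rules out indefinite invariant summands'' does not help either: once you have used compactness to conjugate $R(SO(3))$ into $SO(n+3)$ (which is legitimate), all summands are definite, and the case above survives. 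This is precisely the configuration the paper has to eliminate, and it does so by hard computation: it writes $A_3$ in the block form \eqref{eq-A3-can}, decomposes the $\lambda^{-1}$-part $L_1$ of $A_2(\lambda)$ accordingly, and then uses the Willmore structure equation $B_1^{t}I_{1,3}B_1=0$ together with the $\mathfrak{so}(3)$ commutation relations to force the off-block piece $B_{20}$ to vanish (Lemmas~6.7--6.9). Only after this does fullness kill the $S_0$-block and give irreducibility. You would need an argument of comparable strength here; the soft invariant-subspace reasoning you sketch is not enough.

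A second, smaller issue: the sentence ``the $SO(3)$-orbit through a generic point of $S^{2m}$ is by construction the Bor\r{u}vka--Veronese $2$-sphere'' is false—generic orbits in the $(2m{+}1)$-dimensional irreducible representation are three-dimensional. What is true (and what you need) is that there is, up to scaling, a unique two-dimensional orbit, namely $SO(3)\cdot v_0$ for $v_0$ a weight-zero vector of some $SO(2)$, and that is the Bor\r{u}vka sphere. The paper sidesteps this by first proving minimality (Proposition~\ref{prop-minimal}) via a hyperbolic rotation that pushes the normalized potential into $\mathfrak{so}(n+3)$, and only then invoking Bor\r{u}vka--Calabi; you skip the minimality step entirely and should either restore it or make the two-dimensional-orbit argument above explicit.
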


The theorem is a corollary of the following result and the classical results of Bor\r{u}vka \cite{Boruvka} and Calabi \cite{Calabi} describing homogeneous minimal two-spheres in $S^{n+2}$.
 \begin{proposition}\label{prop-minimal}Each full homogeneous Willmore two-sphere is conformally equivalent to a minimal two-sphere in $S^{n+2}$.
\end{proposition}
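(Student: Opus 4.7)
The plan is to upgrade the conformal $SU(2)$-symmetry guaranteed by Theorem \ref{homRiem}(1) to a $\lambda$-independent loop-group symmetry, and then to conclude that the image $y(S^2)$ is an $SU(2)$-orbit under an irreducible orthogonal representation. Since such an orbit is exactly a Veronese–Bor\r{u}vka 2-sphere, which is minimal, the proposition follows.

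By Theorem \ref{homRiem}(1), up to conjugation the transitive group of biholomorphic automorphisms contains $SU(2)/\{\pm I\}$, so there is a Lie group homomorphism $R\colon SU(2)\to\mathrm{Conf}(S^{n+2})$ covering the standard $SU(2)$-action on $S^2$. As recalled in Section 4, this lifts to a loop-group homomorphism $\chi\colon SU(2)\to \Lambda SO^+(1,n+3)_\sigma$ intertwining the extended frame of the conformal Gauss map:
\begin{equation*}
F(g.z,\overline{g.z},\lambda) = \chi(g,\lambda)\,F(z,\bar z,\lambda)\,k(z,\bar z,g)^{-1},
\end{equation*}
where $k$ has values in the stabilizer $K=SO^+(1,3)\times SO(n)$. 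The next step is to fix the base point $z=0$ and use the loop group construction of \cite{DW1} to write down the normalized/meromorphic potential $\eta$ of $y$: since $\tilde X=S^2$, $\eta$ is meromorphic on $\C$ with a single pole at $\infty$ and trivial monodromy $\pm I$ there. Equivariance of $F$ under the translation subgroup $z\mapsto z+t$ of the $SL(2,\C)$-action then forces the normalized potential at $0$ to have $z$-constant coefficients, while equivariance under the diagonal torus $S^1\subset SU(2)$ (acting on $z$ by $z\mapsto e^{2i\theta}z$) pins down the $\lambda$-dependence and identifies the infinitesimal generators of $\chi$ with a single $\mathfrak{so}(3)$-representation $\rho\colon \mathfrak{so}(3)\to \mathfrak{so}^+(1,n+3)$.

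The decisive step is to show that, after conjugation by a suitable constant loop, $\chi(g,\lambda)$ is actually independent of $\lambda$, i.e.\ the $SU(2)$-action lifts to a genuine (not merely loop-valued) isometric action on $S^{n+2}$ commuting with the conformal Gauss map. For this I would combine the $S^1$-equivariance just observed with the spectral symmetry $\lambda\mapsto e^{i\theta}\lambda$ of the associated family and the triviality of the monodromy at $\infty$ to argue that every Fourier mode of $\chi(g,\cdot)$ is determined by $\chi(g,1)=R(g)\in SO^+(1,n+3)$. Once $\chi$ is $\lambda$-independent, $\rho$ is an orthogonal $\mathfrak{so}(3)$-representation into the maximal compact $\mathfrak{so}(n+3)\subset \mathfrak{so}^+(1,n+3)$, the $SU(2)$-orbit through $y(i)$ lies in a round $S^{n+2}$, and the conformal Gauss map degenerates to the Gauss map of a minimal immersion into that round sphere, which is exactly the condition that $y$ is conformally equivalent to a minimal surface. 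Fullness then forces $\rho$ to be irreducible; since real orthogonal irreducible representations of $SO(3)$ have odd dimension $2m+1$, one gets $n+3=2m+1$ and $y(S^2)$ is a Bor\r{u}vka–Veronese 2-sphere in $S^{2m}$.

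The main obstacle is the passage from the potential-level symmetry to the $\lambda$-independence of $\chi$: this is where one genuinely uses that the domain is $S^2$ (so that the meromorphic potential is globally defined with trivial monodromy at its one pole) rather than a torus or cylinder, where non-trivial monodromy in $\lambda$ is exactly what allows non-minimal Willmore examples such as Ejiri's torus. Once $\lambda$-independence is in hand, the remaining representation-theoretic step and the appeal to the Bor\r{u}vka–Calabi classification are standard, and Proposition \ref{prop-minimal} together with Theorem \ref{thm-s2} follow.
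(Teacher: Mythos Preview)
Your proposal has the right high-level shape but contains a genuine gap at the step you yourself flag as decisive: the claim that, after conjugation by a constant loop, the monodromy $\chi(g,\lambda)$ becomes $\lambda$-independent. This is not what happens, and the argument you sketch for it does not work. First, there is no ``translation subgroup $z\mapsto z+t$'' inside $SU(2)/\{\pm I\}$; translations of $\C$ are not M\"obius transformations coming from $SU(2)$, so you cannot use them to force constant coefficients in the normalized potential. Second, while the isotropy generator $A_3=\hat\chi(T_3)$ is indeed $\lambda$-independent (it fixes the base point), the other two generators $A_1(\lambda),A_2(\lambda)$ genuinely carry $\lambda^{\pm 1}$ terms, cf.\ Proposition~\ref{lambdadistr} and \eqref{eq-A1-A2}. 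The vague appeal to ``spectral symmetry $\lambda\mapsto e^{i\theta}\lambda$'' and trivial monodromy at $\infty$ does not eliminate these terms, nor does it produce a conjugating loop in the twisted loop group.

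The paper's argument is quite different and uses the Willmore condition in a concrete algebraic way that your outline never invokes. One writes $A_1,A_2,A_3$ in block form adapted to the eigenspace decomposition of $A_3$ and exploits the commutation relations \eqref{comrel} together with the Willmore constraint $B_1^tI_{1,3}B_1=0$ on the $\lambda^{-1}$-part of the Maurer--Cartan form. A short contradiction argument (Lemmas 6.9--6.10) forces the block $B_{20}$ to vanish, which simultaneously yields irreducibility via Corollary~\ref{cor-irr} and the inequality $|c|^2-|a|^2\ge 1$ in the $\mathfrak{so}^+(1,3)$-part. Only \emph{then} does one apply a hyperbolic rotation in $SO^+(1,3)$ to set $a=0$, placing all $A_j(\lambda)$ (and hence the normalized potential \eqref{normpot}) in $\mathfrak{so}(n+3)$; minimality follows from Theorem~1.1 of \cite{Wang;minimal}. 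The essential missing ingredient in your proposal is this explicit use of the Willmore isotropy condition to kill the offending block and control the $\mathfrak{so}^+(1,3)$-component; without it, there is no mechanism preventing the $SO(3)$-action from sitting genuinely inside the conformal (non-compact) group.
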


In Section 6.4 we will provide a proof of Proposition \ref{prop-minimal}. Before proceeding, let's first explain the idea of the proof. Since the surface is homogeneous, it can be viewed as an orbit of a group $SO(3)\subset SO^+(1,n+3)$. This induces a representation of  $\mathfrak{so}(3)$ in $\mathfrak{so}{^+(1,n+3)}$. Using this representation, we can describe the Maurer-Cartan  form of the surface $y$.
The Willmore property allows us to introduce the loop parameter $\lambda$ into the M-C form. Moreover, we can obtain the normalized potential of $y$.
 Finally with the help of the loop parameter, we can show that the representation is irreducible and hence we can determine the form of the normalized potential of $y$. Then by Theorem 1.1 of \cite{Wang;minimal},  the surface is minimal in $S^{n+2}$.

\subsection{ The monodromy matrices}
To begin with, let us first recall some basic results concerning $SO(3)$. Let $T_j$ be a basis of $\mathfrak{so}(3)$ which satisfies the commutation relations (see \cite{Hall})
 \begin{equation} \label{comrel}
 [T_3, T_2] = - T_1, \hspace{3mm}  [T_3, T_1] = T_2, \hspace{3mm} [T_1 , T_2] =  T_3.
 \end{equation}
Actually we assume ( setting $S^2=\mathbb{C}\cup\{\infty\}$):
\begin{enumerate}
\item The matrix $T_1$ generates rotations about $1\in\C$: $e^{tT_1}.z = \frac{ z \cos (\frac{1}{2} t) - i\sin (\frac{1}{2} t)}{ - z i \sin (\frac{1}{2} t) +\cos (\frac{1}{2} t)}$;
\item The matrix $T_2$ generates rotations about $i\in\C$: $e^{tT_2}.z = \frac{ z \cos (\frac{1}{2} t) - \sin (\frac{1}{2} t)}{ z \sin (\frac{1}{2} t) +\cos (\frac{1}{2} t)}$;
\item The matrix $T_3$ generates rotations about $0\in\C$: $e^{tT_3}.z = e^{-it}z.$
\end{enumerate}

If we have a symmetry $y(g.z) = U y(z)$ of some Willmore surface into $S^{n+2}$
with $U \in SO(n+3)$, then the corresponding matrix in $SO^+(1, n+3),$ acting on the Grassmannian, the frames etc,  is naturally of the form
\[
\left(
                   \begin{array}{cc}

1 & 0 \\
0 & U \\
                   \end{array}
                   \right).
                  \]
                   For a homogeneous Willmore $S^2$, we can assume that  the group acting on the Grassmannian is contained in the natural $SO(n+3)$ as above. So the monodromy representation $\chi_U( g, \lambda = 1)$  takes values in  $SO(n+3)$. Considering the representation  {$\chi_U(g,\lambda)$ of} $SO(3)$ in
 $\Lambda SO^+(1,n+3)_\sigma$  one can decompose it into irreducible ones.

 It is well known that all irreducible representations of $SO(3)$ act on odd-dimensional spaces and are uniquely determined by the eigenvalues of the image  $\chi_U(T_3,\lambda)$
 of  $ T_3 = \frac{1}{2} diag(-i,i) \in \mathfrak{su}(2)$  under $\chi_U$   (See e.g, Theorem 4.12 of \cite{Hall}).
 These eigenvalues all are of the form $ik,$ where  $k \in \mathbb{Z}$.
 Moreover, the irreducible summands can be read off from the multiplicities of the eigenvalues of $\chi_U(T_3,\lambda)$. As a consequence, we have

 \begin{corollary}\label{cor-irr}The monodromy representation of $SO(3)$ is irreducible on $\R^{n+3}$  if and only if the $0-$eigenvalue of  $\chi_U(T_3,\lambda)$, acting on $\R^{n+4}$, has multiplicity $2$.
\end{corollary}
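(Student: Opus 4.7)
The plan is to rely entirely on the representation theory of $SO(3)$ invoked in the paragraph preceding the corollary. Since the monodromy representation $\chi_U$ takes values in $SO^+(1,n+3)$ via matrices of block form $\mathrm{diag}(1,U)$ with $U\in SO(n+3)$, the natural action on $\R^{n+4}$ splits, as an $SO(3)$-module, into a trivial one-dimensional summand plus the $(n+3)$-dimensional representation whose irreducibility is in question. This separation is the conceptual crux: everything else is weight-counting on the two summands.

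Next I would decompose this $(n+3)$-dimensional summand into irreducibles, say $V_{k_1}\oplus\cdots\oplus V_{k_s}$. By the theorem from \cite{Hall} already cited, each $V_{k_j}$ has odd dimension $2k_j+1$ and $T_3$ acts on $V_{k_j}$ with weights $-ik_j,-i(k_j-1),\ldots,0,\ldots,i(k_j-1),ik_j$, each occurring with multiplicity one. In particular every irreducible summand contains the $0$-weight exactly once, while the trivial summand $\R$ contributes one further $0$-weight. Adding these contributions, the multiplicity of the $0$-eigenvalue of $\chi_U(T_3,\lambda)$ on $\R^{n+4}$ equals exactly $s+1$. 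Hence the action on $\R^{n+3}$ is irreducible ($s=1$) if and only if this multiplicity is $2$, which gives both directions of the stated equivalence.

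I do not foresee any real obstacle in this argument: it is pure weight-bookkeeping for $SO(3)$, and all the representation theory required has already been recorded just above the statement. The only subtle point is to remember to peel off the automatic trivial summand forced by the block form $\mathrm{diag}(1,U)$; this accounts for the "$+1$" in the multiplicity count and is what shifts the irreducibility criterion from "multiplicity $1$" to "multiplicity $2$".
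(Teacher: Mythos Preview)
Your argument is correct and is exactly the reasoning the paper intends: the corollary is stated with no separate proof, merely ``As a consequence, we have'' after recording that every irreducible $SO(3)$-module has a single $0$-weight and that the block form $\mathrm{diag}(1,U)$ contributes one trivial summand. Your weight-count $s+1$ simply makes that sentence explicit.
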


Let $y:S^2\rightarrow S^{n+2}$ be a homogeneous Willmore two-sphere.
 Then the approach of \cite{DW1} requires to consider an extended frame, from which $y$ can be reproduced again. If $F_U(z, \lambda)$ denotes the extended frame associated with the representation $\chi_U$ above, then it turns out to be useful to also consider the extended frame
$\hat{F}(z, \lambda)  = F_U(0,\lambda)^{-1}F_U (z,\lambda).$ This yields the representation
\[\hat{\chi}(g,\lambda) = F_U(0,\lambda)^{-1}\chi_U(g,\lambda) F_U(0,\lambda)\] of $SO(3)$ in
$\Lambda SO^+(1,n+3)_\sigma$.  Note that the image of $SO(3)$ of $\chi$  is again compact, but not necessarily contained in $\Lambda SO(n+3)_\sigma$. But all statements about eigenvalues and irreducible representations made above still hold in the present situation as well.

Under the monodromy representation $\hat{\chi}( \cdot,\lambda)$ relative to $\hat{F}$ we define  (using, by abuse of notation, $\hat{\chi}$ for $\dd\hat{\chi}$):
\begin{equation}\label{eq-Aj}
\hat{\chi}(T_1, \lambda)=A_1(\lambda),~ \hat{\chi}(T_2,\lambda) =A_2(\lambda)~ \hbox{ an }~  \hat{\chi}(T_3, \lambda)=A_3.
\end{equation}
Note that by \cite{DW-equ} (also see below at the end of the roof of the next proposition), $A_3$ does not depend on $\lambda$ and is contained in
$\mathfrak{k}$ and in $\mathfrak{so}(n+3).$
More precisely, using also the notation
\begin{equation}
S  = \left(
\begin{array}{cc}
0 & 1 \\
-1 &0\\
\end{array}
\right).
\end{equation}
we will show

\begin{proposition} \label{lambdadistr}
The matrices $A_j(\lambda)$ only contain the powers $\lambda^{-1}, \lambda^{0},$ and
$\lambda^{1}.$ Moreover, the matrices $A_1(\lambda)$ and $A_2(\lambda)$ have the form of a generator of a translationally equivariant Willmore surface and $A_3$ has the diagonal block form $A_3 = (0, S, A_{3,2}),$ with the $2 \times 2$ matrices $0$ and $S$ and the $n \times n$ matrix $A_{32}$.
\end{proposition}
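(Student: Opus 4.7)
The plan is to differentiate the $SO(3)$-equivariance of the normalized extended frame $\hat F$ at the base point $z=0$, then combine the $\lambda$-structure of the Willmore Maurer-Cartan form supplied by Proposition 2.2 of \cite{DW1} with the Delaunay-type description of the monodromy of equivariant Willmore surfaces from \cite{DW-equ}.

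The equivariance of $\hat F$ takes the form $\hat F(e^{tT_j}.z,\lambda)\,k_j(z,t) = \hat\chi(e^{tT_j},\lambda)\,\hat F(z,\lambda)$ with $k_j(z,t)\in K=SO^+(1,3)\times SO(n)$. Since $\hat F(0,\lambda)=I$, differentiating at $t=0$ and evaluating at $z=0$ would yield
\[
A_j(\lambda) \;=\; \alpha(X_j)|_{z=0} \;+\; \ell_j,\qquad \ell_j\in\mathfrak{k},
\]
where $\alpha=\hat F^{-1}\dd\hat F$ has $\lambda$-Fourier support in $\{-1,0,+1\}$ by Proposition 2.2 of \cite{DW1}, $\ell_j$ is $\lambda$-independent, and $X_j$ is the holomorphic vector field on $S^2$ generated by $T_j$.

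For $j=3$, the rotation $e^{tT_3}.z=e^{-it}z$ fixes $z=0$, so $X_3(0)=0$ and $A_3=\ell_3$ is forced to be $\lambda$-independent and to lie in $\mathfrak{k}$. To obtain the block form $A_3=(0,S,A_{3,2})$ I would interpret the action geometrically on the conformal Gauss map frame: $T_3$ fixes the point $y(0)\in S^{n+2}$ and the oriented mean curvature sphere at $y(0)$, so it preserves a Lorentzian $\R^{1,1}$ pencil (giving the zero $2\times 2$ block inside $\mathfrak{so}^+(1,3)$), it rotates the tangent plane $T_{y(0)}S^2$ with the angular speed inherited from $S^2$ (giving the $\mathfrak{so}(2)$ generator $S$), and it acts on the $n$-dimensional normal fibre (producing $A_{3,2}\in\mathfrak{so}(n)$).

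For $j=1,2$ the vector $X_j(0)$ is non-zero, so $A_j(\lambda)=\alpha(X_j)|_{z=0}+\ell_j$ is automatically a Laurent polynomial supported in $\{\lambda^{-1},\lambda^0,\lambda^{+1}\}$. Moreover, each of $\{e^{tT_1}\}$ and $\{e^{tT_2}\}$ is a one-parameter subgroup of M\"obius transformations of $S^2$ that is conformally conjugate to a group of translations in a suitable coordinate, so $y$ restricted to the corresponding orbit through $0$ is translationally equivariant; the theory of \cite{DW-equ} (already invoked in Section 5) then identifies $A_j(\lambda)$ precisely with the generator of a translationally equivariant Willmore surface. The main obstacle I anticipate is the careful bookkeeping of the gauge term $\ell_j$ needed to verify that the leading coefficients of $A_1(\lambda),A_2(\lambda)$ in $\lambda^{\pm 1}$ really exhibit the full Delaunay/translational shape asserted, rather than only the correct $\lambda$-support.
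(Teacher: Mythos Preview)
Your approach is essentially correct and close in spirit to the paper's, but the two differ in how the ``translational'' interpretation of $A_1,A_2$ is obtained, and your phrasing there hides the one nontrivial step.

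Your differentiation of the equivariance relation at $z=0$, giving $A_j(\lambda)=\alpha(X_j)|_{z=0}+\ell_j$ with $\ell_j\in\mathfrak{k}$, is a clean way to see the $\lambda$-support $\{-1,0,1\}$ directly from the Maurer--Cartan structure; the paper does not argue this way but obtains the support as a byproduct of the Burstall--Kilian result \cite{BK}. For $A_3$ your argument and the paper's coincide: since $e^{tT_3}$ fixes $0$, the relation $I=e^{-tA_3(\lambda)}k_3(t,0)$ forces $A_3\in\mathfrak{k}$ to be $\lambda$-free, and the block shape $(0,S,A_{3,2})$ then follows from the explicit description of the isotropy gauge in \cite[(4.7)]{Do-Wa-sym}; your geometric reading of the tangent/normal splitting is the right intuition, though by itself it only gives block-diagonality and not the vanishing of the $\mathfrak{so}^+(1,1)$ block.

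The one genuine imprecision is the claim that $\{e^{tT_1}\},\{e^{tT_2}\}$ are ``conformally conjugate to a group of translations''. As M\"obius transformations of $S^2$ these are elliptic (rotations about $1$ resp.\ $i$) and are \emph{not} conjugate in $PSL(2,\C)$ to parabolic translations. The paper makes the translation picture rigorous by deleting the fixed point (e.g.\ $1$ for $T_1$), passing to the universal cover $\widetilde{\C\setminus\{1\}}\cong\C$ so that $0\mapsto 0$, and noting that the lifted one-parameter action is by honest real translations; the lifted extended frame still satisfies $\hat F(0,\lambda)=I$, so its monodromy generator $\tilde A_1(\lambda)$ coincides with $A_1(\lambda)$, and \cite{BK} then yields the full Delaunay/translational form. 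This also dissolves the worry you flag about the gauge term $\ell_j$: once one is on $\C$ with a genuine translation action, \cite{BK} (resp.\ \cite{DW-equ}) delivers the complete shape of the generator, not merely its $\lambda$-support.
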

\begin{proof}
Let's consider the matrix $A_1(\lambda)$. We remove $1$ from $\C$ and consider the universal cover $\tilde{M} \cong \C$ of $M = \C \setminus \{1\}$. We can do this such that the covering map maps $0$ to $0$ and that the action of the one parameter group $e^{tT_1}$ lifted to $\tilde{M}$ acts by translations parallel to the real axis. Lifting the extended frame to the universal cover we observe that it attains the value $I$ at $0,$ whence the monodromy is of the form $e^{t \tilde{A}_1}$ and by \cite{BK} it follows that $\tilde{A}_1(\lambda)$ has the form as claimed. It is easy to see that actually $\tilde{A}_1(\lambda) = A_1(\lambda)$ holds.
The claim for $A_2(\lambda)$ follows verbatim.
Let's consider finally $A_3$. The relation
\begin{equation}
\hat{F}( e^{it}z ,\lambda) = \hat{F}( e^{-itT_3}z  ,\lambda) =
e^{-tA_3(\lambda)} \hat{F}( z ,\lambda) k_3(t,z)
\end{equation}
and the property $\hat{F}( 0 ,\lambda) =I$ imply
\begin{equation}
I = e^{-tA_3(\lambda)} k_3(t,0).
\end{equation}
From this we infer that $A_3(\lambda)$ actually is independent of $\lambda$ and contained in
$\mathfrak{k}$. From   $(4.7)$  of \cite{Do-Wa-sym} and the last equation we know how the first $4 \times 4$ block of $A_3$ looks like. Now it is straightforward to verify that $A_3$ has the form stated.
\end{proof}

\subsection{An Application of Wu's Formula}

Using the transformation properties of the one-parameter groups generated by the $T_j$ we introduce a new frame $\tilde F$ which permits to apply Wu's formula easily.
We have
$y(e^{i \theta} r) = y(e^{- \theta T_3}.z) = e^{-\theta A_3}. y(r).$
Moreover, considering the rotation about $i\in \C \subset S^2$, we have
$y(r)=e^{ -2\arctan(r) A_2} .y(0)$
and altogether we have the formula
\begin{equation} \label{trafoy}
y(e^{i \theta}r) = e^{-\theta A_3} e^{-2\arctan(r) A_2} .y(0).
\end{equation}
For the frame $\hat{F}$, which satisfies  $\hat F(z=0)=I$, we obtain
\begin{equation} \label{trafoF}
\hat{F}(e^{i \theta}r) = e^{-\theta A_3} e^{-2\arctan(r) A_2} k_2(0,r) k_3(\theta,r),
\end{equation}
where $k_2$ and $k_3$ are the factors occurring in formula  $(4.7)$  of \cite{Do-Wa-sym}.
These equations also hold after introducing $\lambda$ (note that there is no $\lambda$ inside the $k$'s !).
Since $  k_2(0,r) k_3(\theta,r ) \in SO^+(1,3)\times SO(n)$, we consider a new extended frame of $y$:
\begin{equation} \label{trafoF2}
\tilde F(e^{i \theta}r,\lambda) = e^{-\theta A_3} e^{-2\arctan(r) A_2(\lambda)} e^{\theta A_3}.
\end{equation}
Note that $\tilde F$ and $\hat{F}$ have the same monodromy representation, since they only differ by a gauge in $K$. Moreover, $\tilde F$ is a priori only defined, where the polar representation for $z$ is valid.
However, since $A_3$ is independent of $\lambda$ and has purely imaginary eigenvalues with integer imaginary parts, the frame $\tilde F$  actually is defined on $\C^*$.
The following proposition shows it is
 in fact  defined on $\C$.
\begin{proposition}
The frame $\tilde F$ has the form
\begin{equation} \label{Fsmooth}
\tilde  F(z,\lambda) = \tilde F(e^{i \theta}r,\lambda) =
 e^{-\theta A_3} e^{-2\arctan(r) A_2(\lambda)} e^{\theta A_3}
  = e^{  \frac{-2 \arctan(r)}{r} ( u A_2 + v A_1 )}.
  \end{equation}
In particular, $\tilde F$ is real analytic at $z=0$ and we have $\tilde F(0,\lambda) = I.$
As a consequence, $\tilde F$ and $\hat{F}$ have the same normalized potential and generate the same Willmore surface.
\end{proposition}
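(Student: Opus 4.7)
The plan is to verify the key algebraic identity
\begin{equation*}
e^{-\theta A_3}\, e^{-2\arctan(r)\, A_2(\lambda)}\, e^{\theta A_3} \;=\; \exp\!\left(-\tfrac{2\arctan(r)}{r}\bigl(u A_2(\lambda) + v A_1(\lambda)\bigr)\right)
\end{equation*}
by computing the adjoint action of $e^{\theta A_3}$ on $A_2(\lambda)$, to use the resulting expression to extend $\tilde F$ real analytically across $z=0$, and finally to transfer the normalization from $\hat F$ using gauge-invariance of the normalized potential.

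First, since $\hat\chi$ is a Lie algebra homomorphism the commutation relations \eqref{comrel} descend to $[A_3, A_2(\lambda)] = -A_1(\lambda)$ and $[A_3, A_1(\lambda)] = A_2(\lambda)$. Consequently $\operatorname{ad}_{A_3}^{\,2}$ acts as $-\mathrm{id}$ on the plane $\operatorname{span}(A_1(\lambda), A_2(\lambda))$, and summing the power series for $\exp(\operatorname{ad}_{-\theta A_3})A_2(\lambda)$ yields $\cos\theta\, A_2(\lambda) + \sin\theta\, A_1(\lambda)$. Conjugating $e^{-2\arctan(r) A_2(\lambda)}$ by $e^{\theta A_3}$ and substituting $u = r\cos\theta$, $v = r\sin\theta$ then gives the claimed identity.

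Second, the function $\frac{\arctan(r)}{r} = 1 - \frac{r^2}{3} + \frac{r^4}{5} - \cdots$ is real analytic on all of $\R$ with value $1$ at the origin, and $r^2 = u^2 + v^2$. Therefore the exponent on the right-hand side is a real analytic $\mathfrak{g}$-valued function of $(u,v)$ vanishing at $(0,0)$, so $\tilde F$ extends real analytically to all of $\C$ with $\tilde F(0,\lambda) = I$.

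Finally, comparing \eqref{trafoF} and \eqref{trafoF2} yields $\hat F(z,\lambda) = \tilde F(z,\lambda)\, g(z,\bar z)$, where $g(z,\bar z) = e^{-\theta A_3} k_2(0,r)\, k_3(\theta,r)$ is $\lambda$-independent (since $A_3$ does not depend on $\lambda$ by Proposition \ref{lambdadistr}) and takes values in $K = SO^+(1,3)\times SO(n)$. In the loop group formalism, such a right gauge by a $\lambda$-independent factor modifies only the positive factor in the Birkhoff splitting and hence leaves both the normalized potential and the underlying conformal Gauss map unchanged. Thus $\tilde F$ and $\hat F$ share the same normalized potential and produce the same Willmore immersion $y$. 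The only delicate step I anticipate is the sign bookkeeping in the $\mathfrak{so}(3)$ commutators, which determines whether the rotation on $\operatorname{span}(A_1,A_2)$ picks up $+\sin\theta$ or $-\sin\theta$; once this is settled, the remaining verifications are routine.
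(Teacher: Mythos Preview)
Your proof is correct and follows essentially the same route as the paper: both compute $e^{-\theta\,\mathrm{ad}(A_3)}A_2(\lambda)=\cos\theta\,A_2+\sin\theta\,A_1$ from the commutation relations \eqref{comrel}, rewrite the exponent as $\frac{\arctan r}{r}(uA_2+vA_1)$, and invoke real-analyticity of $\arctan(r)/r$ at $r=0$. You go a step further than the paper by explicitly spelling out the gauge relation $\hat F=\tilde F\cdot e^{-\theta A_3}k_2k_3$ with a $\lambda$-independent $K$-valued factor (using $A_3\in\mathfrak{k}$), which justifies the ``same normalized potential'' claim that the paper states without argument.
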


\begin{proof}
In view of (\ref{trafoF2}) it suffices to consider
\[E = e^{- \theta A_3} \arctan(r) A_2 (\lambda ) e^{ \theta A_3} =
  \arctan (r) exp( -\theta ad (A_3)  (A_2)) .\]
But by (\ref{comrel})
 this  unravels  to\[E =  \arctan(r) ( A_2\cos  \theta  + A_1 \sin  \theta) .\]
 Now in the upper half-plane  the angle $\theta$ of an element of $\C$ in the range $(-\pi, \pi)$ is given by $ \arccos (u/r)$ and by $ - \arccos (u/r)$  in the lower half-plane.
 Hence $E =  \arctan(r) ( \frac{u}{r} A_2 + \frac{\sqrt{r^2 - u^2}}{r} A_1)  =
 \frac{\arctan(r)}{r} ( u A_2 + v A_1)$ for $u + iv$ in the upper half-plane and
  $E =  \arctan(r) ( \frac{u}{r} A_2 - \frac{\sqrt{r^2 - u^2}}{r} A_1)  =
 \frac{\arctan(r)}{r} ( u A_2 + v A_1)$ for $u + iv$ in the lower half-plane.
 As a consequence,  $E$ is well defined on $\C$ and real-analytic there.
 Finally, it is clear now that $E(z=0) = 0,$ whence $\tilde F(0,\lambda) = I.$
  \end{proof}

As a consequence of (\ref{Fsmooth})   we can compute the normalized potential of the given Willmore immersion by using Wu's Formula \cite{Wu;normalized}.
In the statement of loc.cit one considers the Maurer-Cartan form of some frame for the given harmonic map (in our case the conformal Gauss map of the given Willmore surface).
Then, at least in some neighbourhood of the base point, say, $z = 0$, one can set
$ \bar z = 0,$ since the frame  is real-analytic.
From the resulting differential one-form one computes the normalized potential.

Actually, since the frame $\tilde F$ is real-analytic near the origin, one can set $\bar z = 0$ already directly in the frame. Doing this we obtain (by using  (\ref{Fsmooth}) )
\begin{equation*}
\tilde F(z, \bar z = 0, \lambda) =   e^{- zA_2 + iz A_1},
\end{equation*}
 since the real analytic function $\arctan(r) /r $ attains the value 1 at $r=0$.
Now,  the Maurer-Cartan form of $\tilde F(z, \bar z = 0, \lambda) $ is
\begin{equation} \label{holomorphicalpha}
hol(\alpha) = (-A_2 +  iA_1)\dd z.
\end{equation}
In view of Proposition \ref{lambdadistr} we can write
\begin{equation}\label{eq-A1-A2}
A_1=\lambda^{-1}H_1+H_0+\lambda \bar{H}_1 \hspace{2mm} \mbox{and} \hspace{2mm}
A_2=\lambda^{-1}L_1+L_0+\lambda \bar{L}_1.
\end{equation}
From this we derive
\begin{theorem}
We have
\begin{equation}\label{eq-wu-1}
L_1 = -i H_1,
\end{equation}
and the normalized potential $\xi$ of the given homogeneous Willmore surface is
\begin{equation} \label{normpot}
\xi (z) \dd z = e^{z \beta_0} \beta_1e^{-z \beta_0}  = e^{z ad(\beta_0)} \beta_1,
\end{equation}
where $\beta_0 = -L_0 +iH_0$ and $\beta_1 = \lambda^{-1}(-L_1 + i H_1)$.
\end{theorem}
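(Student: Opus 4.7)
\emph{First identity $L_1 = -iH_1$.} The extended frame $\tilde F$ belongs to the DPW formalism for the conformally harmonic Gauss map into the symmetric space $SO^+(1,n+3)/(SO^+(1,3)\times SO(n))$, so the Maurer--Cartan form recalled in Section 4 has the shape
$$\tilde F^{-1}\dd\tilde F \;=\; \lambda^{-1}\alpha'_{-1}\,\dd z \;+\; \alpha_0 \;+\; \lambda\,\alpha''_{+1}\,\dd\bar z,$$
with $\alpha'_{-1},\alpha''_{+1}\in\mathfrak p^{\C}$ and $\alpha_0\in\mathfrak k^{\C}$. In particular its $(1,0)$-component carries only the powers $\lambda^{-1}$ and $\lambda^0$. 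Because $\tilde F$ is real-analytic at $z=0$, restriction to $\bar z=0$ is legitimate and \eqref{holomorphicalpha} gives
$$\mathrm{hol}(\alpha)=(-A_2+iA_1)\,\dd z=\bigl[\lambda^{-1}(-L_1+iH_1)+(-L_0+iH_0)+\lambda(-\bar L_1+i\bar H_1)\bigr]\dd z.$$
Absence of a $\lambda^{+1}$ term forces $-\bar L_1+i\bar H_1=0$, and the reality condition of the twisted loop algebra then yields $L_1=-iH_1$.

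\emph{Normalized potential.} Setting $P:=-L_1+iH_1$ and $Q:=\beta_0=-L_0+iH_0$, the identity just proved gives
$$\tilde F(z,\bar z=0,\lambda)=\exp\!\bigl(z(\lambda^{-1}P+Q)\bigr).$$
To apply Wu's formula one needs the Birkhoff splitting $\tilde F(\cdot,\bar z=0,\cdot)=C_- C_+$ in the big cell with $C_-(z,\infty)=I$; the normalized potential is then $\xi=C_-^{-1}\dd C_-$. Because $Q\in\mathfrak k^{\C}$, the constant loop
$$C_+(z,\lambda):=e^{zQ}$$
sits in $\Lambda^+$, and correspondingly
$$C_-(z,\lambda):=e^{z(\lambda^{-1}P+Q)}e^{-zQ}$$
satisfies $C_-(z,\infty)=I$ and expands in non-positive powers of $\lambda$ since $\lambda^{-1}P+Q$ does. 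By uniqueness in the big cell this is the required splitting.

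A direct calculation using the Maurer--Cartan identity $C_-^{-1}\dd C_- = C_+(C^{-1}\dd C)C_+^{-1} - \dd C_+\cdot C_+^{-1}$, together with $C^{-1}\dd C = (\lambda^{-1}P+Q)\,\dd z$ and $\dd C_+\cdot C_+^{-1}=Q\,\dd z$, shows that the $Q$-terms cancel, leaving
$$\xi=\lambda^{-1}e^{z\,\mathrm{ad}(Q)}P\,\dd z = e^{z\,\mathrm{ad}(\beta_0)}\beta_1,$$
as claimed.

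The principal obstacle lies in the first step: one must invoke the structural fact that the $(1,0)$-component of the extended-frame Maurer--Cartan form carries no positive powers of $\lambda$. Given this input, the subsequent Birkhoff factorization is immediate because $\beta_0$ already lies in $\mathfrak k^{\C}$, and the final identification of $\xi$ reduces to a routine conjugation computation.
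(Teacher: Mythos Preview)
Your proof is correct and follows essentially the same route as the paper. For $L_1=-iH_1$ both arguments use that $\mathrm{hol}(\alpha)$ carries no $\lambda^{+1}$ term; for the normalized potential the paper simply invokes Wu's formula (solve $d\psi_0=\psi_0\beta_0$, $\psi_0(0)=I$, obtain $\psi_0=e^{z\beta_0}$, then $\xi=\psi_0\beta_1\psi_0^{-1}$), whereas you unpack this by writing down the Birkhoff splitting $C_-C_+$ with $C_+=e^{zQ}$ explicitly---which is exactly the mechanism behind Wu's formula in this constant-coefficient situation.
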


\begin{proof}
The first claim follows from the fact that $hol(\alpha)$ does not contain the power $\lambda =
\lambda^1$, as stated in the proof of ``Wu's Formula", Theorem 2.1 of \cite{Wu;normalized}.
From the proof of loc.cit it is also clear that we need to consider
$\beta_1 = \lambda^{-1}( -L_1 + i H_1)$ and  $\beta_0 =  -L_0 + iH_0 $ and solve the ode $d \psi_0 = \psi_0 \beta_0 $ with $\psi_0(0) = I$.
Since this solution is $\psi_o(z) = e^{z\beta_0}$ the claim follows directly from loc.cit.
\end{proof}


\subsection{Uniqueness of the monodromy representation}

First we describe $A_3$ explicitly in view of Proposition \ref{lambdadistr}.
\begin{lemma} We have (up to a conjugation in $K$)
\begin{equation}\label{eq-A3-can}A_3=\hbox{diag}\{0_{2\times 2},S,S_j\}, \hbox{ with }
 \mathbb{S} = (S_0, S_1, \dots)  \hspace{2mm} \mbox{and} \hspace{2mm}
S_j=j \left(
                                                   \begin{array}{ccc}
                                                     S & 0 & 0 \\
                                                     0 & \cdots & 0 \\
                                                     0 & 0 & S \\
                                                   \end{array}
                                                 \right)_{2n_j\times 2n_j}
,~0\leq j\leq m.\end{equation}
\end{lemma}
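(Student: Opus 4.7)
\medskip

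\noindent\textbf{Proof plan.} My plan is to reduce the statement to a canonical form argument for real skew-symmetric matrices, using Proposition \ref{lambdadistr} as the starting point and an integrality argument for the spectrum of $A_3$.

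First I would observe that by Proposition \ref{lambdadistr} one already has $A_3=\mathrm{diag}(0_{2\times 2},S,A_{3,2})$, with $A_{3,2}$ an $n\times n$ real skew-symmetric matrix lying in $\mathfrak{so}(n)\subset\mathfrak{k}$. Any further conjugation by $\{I_4\}\times SO(n)\subset K$ leaves the first two diagonal blocks untouched, so it suffices to put $A_{3,2}$ in normal form by an $SO(n)$-conjugation.

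The key step is to establish that every eigenvalue of $A_3$ lies in $i\mathbb{Z}$. The element $T_3\in \mathfrak{so}(3)$ generates rotations of $S^2$ about the origin via $e^{tT_3}.z=e^{-it}z$, which have period $2\pi$; equivalently, in $SO(3)=SU(2)/\{\pm I\}$ one has $e^{2\pi T_3}=\mathrm{Id}$. Applying $\hat{\chi}(\cdot,\lambda)$ and using that $A_3$ is $\lambda$-independent gives $e^{2\pi A_3}=I$, so the spectrum of $A_3$, and therefore of $A_{3,2}$, is contained in $i\mathbb{Z}$.

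Then I would invoke the standard real canonical form for elements of $\mathfrak{so}(n)$ with integer-imaginary spectrum: any such matrix is $SO(n)$-conjugate to a block-diagonal matrix whose blocks are a zero-block of size equal to the kernel dimension, together with $2\times 2$ blocks of the form $jS$, one pair for each conjugate pair of eigenvalues $\pm ij$ with $j\geq 1$. Gathering the blocks $jS$ sharing the same $j$ into a single block $S_j=j\,\mathrm{diag}(S,\dots,S)$ of size $2n_j$, where $n_j$ is the multiplicity of the pair $\pm ij$, produces the advertised form $\mathrm{diag}(0_{2\times 2},S,S_0,S_1,\dots,S_m)$, with $S_0$ the residual zero block.

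The main obstacle I anticipate is not the algebra itself but the bookkeeping around the zero eigenspace: showing that the leftover kernel of $A_{3,2}$ has even dimension $2n_0$ so that it can be written as a block $S_0$ in the stated notation. I would handle this by decomposing $\hat{\chi}(\cdot,\lambda)$ into $SO(3)$-irreducibles (each $(2k+1)$-dimensional irreducible contributing exactly one $0$-eigenvector to $A_3$, one $\pm i$-pair, \ldots, one $\pm ik$-pair), noting that the prescribed first $4\times 4$ block $\mathrm{diag}(0_{2\times 2},S)$ already consumes a canonical piece of this decomposition; if any isolated trivial summand remains, one more reshuffle inside $SO^+(1,3)\subset K$ absorbs it, leaving an even-dimensional kernel in $A_{3,2}$ as required.
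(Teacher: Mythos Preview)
Your argument is essentially the intended one; the paper states this lemma without proof, and the natural justification is precisely what you outline: start from the block form $A_3=\mathrm{diag}(0_{2\times2},S,A_{3,2})$ of Proposition~\ref{lambdadistr}, use the integrality of the spectrum (already recorded just before Corollary~\ref{cor-irr}, or via your periodicity argument $e^{2\pi T_3}=I$ in $SO(3)$), and then bring $A_{3,2}\in\mathfrak{so}(n)$ to real canonical form by conjugating with $\{I_4\}\times SO(n)\subset K$.

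The one step that does not go through is your parity fix. Conjugation by $K=SO^+(1,3)\times SO(n)$ preserves the splitting $\R^{n+4}=\R^4\oplus\R^n$, so an $SO^+(1,3)$-move cannot ``absorb'' a zero-eigenvector of $A_{3,2}$ into the first block; $\dim\ker A_{3,2}$ is a $K$-conjugation invariant (it is $\equiv n\bmod 2$), and nothing in the hypotheses forces it to be even at this point of the argument. The correct resolution is not another conjugation but the observation that the exact size of $S_0$ is immaterial for everything that follows: in the next lemma the relation $[A_3,L_1]=-iL_1$ only forces each column of $B_{20}$ to be a scalar multiple of $e_0$, and the subsequent pairing of these columns into $(b_{1k},\hat b_{1k})$ is purely notational. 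The even size $2n_0$ is in fact recovered a posteriori, since the main theorem shows the surface is full in an even-dimensional sphere. So keep your first three paragraphs and replace the final ``reshuffle'' by this remark.
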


\subsubsection{Irreducibility of the monodromy representation}
By Corollary \ref{cor-irr}, the monodromy representation is irreducible if and only if the $S_0-$term does not show up  in \eqref{eq-A3-can}.
Substituting \eqref{eq-A1-A2} and \eqref{eq-wu-1} into the commutation properties
$[A_3 ,A_2(\lambda)]=- A_1(\lambda),\ [A_3 ,A_1(\lambda)]= A_2(\lambda),\ [A_1(\lambda),A_2(\lambda)]= A_3,$
we obtain
\begin{lemma}\ \label{lemma-comm-1}
\begin{enumerate}
\item $[A_3, L_1]=-iL_1$, $[A_3,[A_3, L_1]]=-L_1$;
\item $[A_3, -L_0+iH_0]=-i(-L_0+iH_0)$, $[A_3,[A_3, -L_0+iH_0]]=-(-L_0+iH_0)$;
\item $[H_0, L_0]+2i[L_1,\bar L_1]=A_3$;
\item $[\bar L_1, -L_0+iH_0]=0$.
\end{enumerate}
\end{lemma}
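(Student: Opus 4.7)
The plan is to treat the lemma as a systematic bookkeeping of the three $\mathfrak{so}(3)$ commutation relations $[A_3,A_1(\lambda)]=A_2(\lambda)$, $[A_3,A_2(\lambda)]=-A_1(\lambda)$, $[A_1(\lambda),A_2(\lambda)]=A_3$, after inserting the $\lambda$-expansions \eqref{eq-A1-A2} and using that $A_3$ is independent of $\lambda$ and that $L_1=-iH_1$ (so also $\bar L_1 = i\bar H_1$, equivalently $\bar H_1 = -i\bar L_1$). The main device is comparing coefficients of $\lambda^{-1},\lambda^{0},\lambda^{1}$ on each side.

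First, I would handle the two brackets with $A_3$. Since $A_3$ is $\lambda$-independent, $[A_3,A_1(\lambda)]=A_2(\lambda)$ splits into $[A_3,H_1]=L_1$, $[A_3,H_0]=L_0$, $[A_3,\bar H_1]=\bar L_1$, and likewise $[A_3,A_2(\lambda)]=-A_1(\lambda)$ yields $[A_3,L_1]=-H_1$, $[A_3,L_0]=-H_0$, $[A_3,\bar L_1]=-\bar H_1$. Substituting $H_1=iL_1$ into $[A_3,L_1]=-H_1$ gives $[A_3,L_1]=-iL_1$, and iterating gives $[A_3,[A_3,L_1]]=-i[A_3,L_1]=-L_1$, which is (1). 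For (2), a direct computation using the two order-zero identities yields
\begin{equation*}
[A_3,-L_0+iH_0] = -[A_3,L_0] + i[A_3,H_0] = H_0 + iL_0 = -i(-L_0+iH_0),
\end{equation*}
and applying $A_3$ once more produces the second identity in (2).

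Next, for (3) and (4) I would expand $[A_1(\lambda),A_2(\lambda)]=A_3$ and read off the coefficients. Since the right-hand side is $\lambda$-independent, the $\lambda^0$ coefficient must equal $A_3$ while the $\lambda^{\pm 1}$ (and $\lambda^{\pm 2}$) coefficients must vanish. The $\lambda^0$ equation is
\begin{equation*}
[H_1,\bar L_1]+[H_0,L_0]+[\bar H_1,L_1]=A_3,
\end{equation*}
and substituting $H_1=iL_1$, $\bar H_1=-i\bar L_1$ collapses the two outer brackets to $2i[L_1,\bar L_1]$, giving (3). For (4), the $\lambda^{1}$ equation is $[H_0,\bar L_1]+[\bar H_1,L_0]=0$, which after $\bar H_1=-i\bar L_1$ becomes $[\bar L_1,H_0]=-i[\bar L_1,L_0]$; this is exactly $[\bar L_1,-L_0+iH_0]=0$.

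Nothing deep is hidden here — the only real source of friction is keeping the signs, conjugations, and factors of $i$ straight when substituting $L_1=-iH_1$ and $\bar H_1=-i\bar L_1$, so I would be careful to separate real and imaginary parts before identifying coefficients. As a consistency check one can also extract the $\lambda^{-1}$ and $\lambda^{\pm 2}$ coefficients of $[A_1,A_2]=A_3$: the $\lambda^{-1}$ coefficient gives the conjugate of (4), while the $\lambda^{\pm 2}$ coefficients reduce, via $L_1=-iH_1$, to the automatic identity $[L_1,L_1]=0$, confirming the calculation.
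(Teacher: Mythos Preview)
Your proposal is correct and follows exactly the approach indicated in the paper, which simply states that the lemma is obtained by substituting \eqref{eq-A1-A2} and \eqref{eq-wu-1} into the three $\mathfrak{so}(3)$ commutation relations and comparing $\lambda$-coefficients. Your write-up supplies the straightforward details of this substitution, and the sign and factor-of-$i$ computations (including the conjugate relation $\bar H_1=-i\bar L_1$ and the consistency checks at $\lambda^{\pm 1},\lambda^{\pm 2}$) are all carried out correctly.
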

We decompose $L_1$ and $-L_0+iH_0$ according to \eqref{eq-A3-can}
\[L_1=\left(
        \begin{array}{cc}
          0 & B_1 \\
          -B_1^tI_{1,3} & 0 \\
        \end{array}
      \right)
\hbox{ with } B_1=\left(
        \begin{array}{ccccc}
          B_{10} & \cdots & B_{1m} \\
           B_{20} &  \cdots & B_{2m} \\
        \end{array}
      \right)\]
      and \[
      -L_0+iH_0=
      \left(
        \begin{array}{cccccc}
         \hat R_{11} & \hat R_{00}  &  0 &\cdots  &0 \\
          -\hat R_{00}^tI_{1,1} & \hat R_{22}&  0 &\cdots  &0 \\
         0& 0 &R_{00}&\cdots  &R_{0m} \\
         \cdots &  \cdots  & \cdots  & \cdots  &\cdots  \\
         0& 0 &-R_{0m}^t&\cdots  & R_{mm} \\
        \end{array}
      \right).\]
Set
\[Q_1=\left(
        \begin{array}{cc}
          1& -i \\
          -1 & i \\
        \end{array}
      \right),\ \
Q_2=\left(
        \begin{array}{cc}
          1& -i \\
          i & 1 \\
        \end{array}
      \right) \hbox{ and } e_0=\left(
                                            \begin{array}{c}
                                              1 \\
                                              -i \\
                                            \end{array}
                                          \right).
\]

By using (1) and (2) of Lemma \ref{lemma-comm-1} and elementary computations, we derive
\begin{lemma}\
\begin{enumerate}
\item $B_{1j}=0$ for all $j\neq 1$ and
$B_{11}= \left(
                   \begin{array}{ccc}
                     a_{11}Q_1 & \cdots & a_{1,n_1}Q_1 \\
                   \end{array}
                 \right)
         ;
$
\item $B_{2j}=0$ for all $j\neq 0,2$, and
\[B_{20}=\left(
                   \begin{array}{cccccc}
                     b_{11}e_0 & \hat b_{11}e_0& \cdots &b_{1,n_0}e_0&\hat b_{1,n_0}e_0 \\
                   \end{array}
                 \right),\ B_{22}=\left(
                   \begin{array}{ccc}
                     c_{11}Q_2 & \cdots & c_{1,n_2}Q_2 \\
                   \end{array}
                 \right). \]
\item $\hat R_{11}=\hat R_{22}=0$ and $\hat R_{00}=\left(
                                                     \begin{array}{cc}
                                                       a & -ia \\
                                                       c & -ic\\
                                                     \end{array}
                                                   \right)
$;
$R_{jl}=0$, for all $j-l\neq1$, $j\leq l$, and
$R_{j,j+1}=\left(
              \begin{array}{ccc}
                q_{kl}^{(j)}Q_2 \\
              \end{array}
            \right)_{1\leq k\leq n_j, 1\leq l\leq n_{j+1}}
.$
\end{enumerate}
\end{lemma}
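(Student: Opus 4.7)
The plan is to unpack the eigenvalue equations (1) and (2) of Lemma~\ref{lemma-comm-1} blockwise against the form of $A_3$ in \eqref{eq-A3-can}. The key technical tool is the following observation about $2\times 2$ complex matrices: the operators $L_S$ and $R_S$ of left and right multiplication by $S$ commute, each has spectrum $\{+i,-i\}$ of multiplicity $2$, and their four joint eigenspaces are one-dimensional, spanned by explicit rank-one matrices. In particular, $Q_2$ spans the joint $(+i,+i)$-eigenspace, $e_0$ is the $-i$-eigenvector of $S$ from the left, and $Q_1$ lies in the $+i$-eigenspace of $R_S$. Every equation of the form $(\alpha_0 L_S + \alpha_1 R_S)M = \mu M$ can then be solved by matching $\mu$ with the four possible eigenvalues $\alpha_0\sigma_0 + \alpha_1\sigma_1$ for $\sigma_0,\sigma_1\in\{+i,-i\}$.

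For parts~(1) and~(2), the off-diagonal form of $L_1\in\mathfrak{m}^{\mathbb{C}}$ reduces relation~(1) to the single matrix equation $A_{3,\mathrm{top}}B_1 - B_1 A_{3,\mathrm{bot}} = -iB_1$, with $A_{3,\mathrm{top}} = \mathrm{diag}(0_{2\times 2}, S)$ and $A_{3,\mathrm{bot}} = \mathrm{diag}(S_0,\ldots,S_m)$. Splitting $B_1$ into its $2\times 2$ sub-blocks and reading off each equation, I obtain $jB_{1j}^{(k)}S = iB_{1j}^{(k)}$ for the top row group and $SB_{2j}^{(k)} - jB_{2j}^{(k)}S = -iB_{2j}^{(k)}$ for the bottom row group. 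The eigenvalue analysis forces $B_{1j}=0$ for $j\neq 1$, $B_{2j}=0$ for $j\notin\{0,2\}$, and confines the surviving sub-blocks to the $+i$-eigenspace of $R_S$ for $B_{11}^{(k)}$, to the $-i$-eigenspace of $L_S$ (columns proportional to $e_0$) for $B_{20}^{(k)}$, and to $\mathbb{C}\cdot Q_2$ for $B_{22}^{(k)}$. Combining this with the rank-one/null structure of $L_1$ inherent to the generator of a translationally equivariant Willmore surface (Proposition~\ref{lambdadistr}) pins down $B_{11}^{(k)} = a_{1k}Q_1$, $B_{20}^{(k)} = (b_{1k}e_0, \hat b_{1k}e_0)$ and $B_{22}^{(k)} = c_{1k}Q_2$.

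For part~(3), the matrix $-L_0+iH_0$ is \emph{block-diagonal} in the $(\mathfrak{k},\mathfrak{m})$-decomposition, so relation~(2) decouples into independent block equations. The $(0,0)$-block $\hat R_{11}$ satisfies $0 = -i\hat R_{11}$, hence vanishes; the $(1,1)$-block $\hat R_{22}$ satisfies $[S,\hat R_{22}] = -i\hat R_{22}$, but the spectrum of $\mathrm{ad}(S)$ on $2\times 2$ matrices is $\{0,\pm 2i\}$, so $\hat R_{22}=0$; and the $(0,1)$-block $\hat R_{00}$ satisfies $-\hat R_{00}S = -i\hat R_{00}$, forcing rows proportional to $(1,-i)$ and producing the claimed form of $\hat R_{00}$. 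Inside the $(S_j,S_l)$-corner with $0\le j\le l\le m$, each $2\times 2$ sub-block of $R_{jl}$ satisfies $jSM - lMS = -iM$, whose non-trivial solutions require $l=j+1$ and lie in $\mathbb{C}\cdot Q_2$. This yields $R_{jl}=0$ for $l\neq j+1$ and $R_{j,j+1} = (q_{kl}^{(j)}Q_2)$.

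The main technical obstacle will be justifying that the two-dimensional $+i$-eigenspace of $R_S$ collapses to the one-dimensional $\mathbb{C}\cdot Q_1$ for $B_{11}^{(k)}$, and similarly that only the $Q_2$-direction survives for $R_{j,j+1}$: for these one must invoke the null/rank-one structure of $L_1$ and $-L_0+iH_0$ inherited from the Willmore Maurer--Cartan form via Proposition~\ref{lambdadistr}, in addition to the purely algebraic commutation relations of Lemma~\ref{lemma-comm-1}. Once this is in place, the enumeration of all surviving blocks is a direct case-by-case verification.
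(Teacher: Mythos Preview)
Your proposal is correct and is exactly the detailed unpacking of the paper's one-line proof (``by using (1) and (2) of Lemma~\ref{lemma-comm-1} and elementary computations''): the joint eigenspace analysis of $L_S$ and $R_S$ against the block form \eqref{eq-A3-can} of $A_3$ is precisely the elementary computation intended. Your explicit flagging of the one genuine subtlety---that the commutation relations alone leave $B_{11}^{(k)}$ (and the $j=0$ case of $R_{j,j+1}$) in a two-dimensional eigenspace, and that one must invoke the specific Willmore Maurer--Cartan structure carried by $A_1(\lambda),A_2(\lambda)$ via Proposition~\ref{lambdadistr} to force the $Q_1$- and $Q_2$-directions---makes the argument more transparent than the paper's own presentation.
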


\begin{lemma}If $B_{20}=0$, then   $A_1(\lambda)$, $A_2(\lambda)$ and $A_3$ define an irreducible representation.
\end{lemma}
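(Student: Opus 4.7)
The plan is to derive, from $B_{20}=0$ together with the fullness of $y$, that $n_0=0$; by Corollary~\ref{cor-irr} this gives the claimed irreducibility. I argue by contradiction and assume $n_0\geq 1$.

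Combined with $B_{10}=0$ from the preceding lemma, the hypothesis $B_{20}=0$ forces $L_1$, $\bar L_1$ (and hence $H_1=iL_1$, $\bar H_1$) to vanish on $V_0$ and to have zero image in $V_0$. Consequently, for $v\in V_0$,
\[
A_1(\lambda)v=H_0v,\qquad A_2(\lambda)v=L_0v,
\]
both $\lambda$-independent and contained in $V_1$, with the $V_1$-component governed by $R_{01}^{t}v$; in particular, $v$ generates a trivial $\mathfrak{so}(3)$-subrepresentation precisely when $R_{01}^{t}v=0$. I will combine two opposing rank estimates on the block $R_{01}$ of $\beta_0=-L_0+iH_0$ to reach a contradiction.

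\textbf{Upper bound.} The relation $[A_3,\beta_0]=-i\beta_0$ from Lemma~\ref{lemma-comm-1}(2), restricted to the $(V_0,V_1)$-block, gives $R_{01}(S_1-iI)=0$. Hence the $(-i)$-eigenspace $V_1^{-i}$ of $S_1$ (of complex dimension $n_1$) lies in $\ker R_{01}$; equivalently, the image of $R_{01}^{t}$ is contained in $V_1^{-i}$, which is self-perpendicular for the bilinear pairing. The relation $[\bar L_1,\beta_0]=0$ from Lemma~\ref{lemma-comm-1}(4), restricted to the $(V_{0'},V_0)$-block, produces $\bar B_{11}R_{01}^{t}=0$. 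Using the explicit rank-one form $B_{11}=(a_{11}Q_1,\ldots,a_{1n_1}Q_1)$ from the previous lemma, one checks that $\bar B_{11}$ vanishes on $V_1^{+i}$ and restricts to $V_1^{-i}$ as a nonzero rank-one map, whose kernel inside $V_1^{-i}$ is therefore $(n_1-1)$-dimensional. Thus the image of $R_{01}^{t}$ lies in that $(n_1-1)$-dimensional subspace, yielding
\[
\mathrm{rank}\,R_{01}\leq n_1-1.
\]

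\textbf{Lower bound via fullness.} The fullness of $y:S^2\to S^{n+2}$ rules out any nonzero trivial $SO(3)$-subrepresentation of $\R^{n+3}$; otherwise the equivariance of $y$ would confine its image to a proper affine subspace of $\R^{n+3}$, and hence to a proper subsphere of $S^{n+2}$. Applied to $V_0$, this forces $\ker R_{01}^{t}=0$, so $\mathrm{rank}\,R_{01}=2n_0$.

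\textbf{Matching counts of irreducible components.} The total number of $\mathfrak{so}(3)$-irreducible summands of the representation on $\R^{n+3}$ equals the weight-zero multiplicity $1+2n_0$, while the number of summands of dimension at least three equals the weight-$\pm1$ multiplicity $1+n_1$. Fullness forces these two counts to coincide, so $n_1=2n_0$.

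Combining the three estimates gives $2n_0\leq n_1-1=2n_0-1$, which is impossible. Therefore $n_0=0$, and by Corollary~\ref{cor-irr} the representation is irreducible.

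The main technical step will be the explicit verification that $\bar B_{11}|_{V_1^{-i}}$ has kernel of dimension exactly $n_1-1$, using the $Q_1$-block form of $B_{11}$ from the previous lemma. The degenerate case $B_{11}=0$, in which the upper bound argument breaks down, must be handled separately by a parallel rank count propagated through $R_{12}$ via Lemma~\ref{lemma-comm-1}(3).
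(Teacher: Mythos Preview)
Your approach is genuinely different from the paper's, but it contains a fatal error in the rank comparison. The issue is the lower bound: you assert that fullness forces $\mathrm{rank}\,R_{01}=2n_0$, but this is impossible. From the block form $R_{01}=(q_{kl}^{(0)}Q_2)$ established in the preceding lemma, and since $Q_2$ has rank one, every $2\times2$ row block of $R_{01}$ has its two rows proportional; hence $\mathrm{rank}_{\C}R_{01}\le n_0$ unconditionally. What fullness actually gives is only that the \emph{real} kernel of $R_{01}^{t}$ on $\R^{2n_0}$ is zero, and this translates (via a dimension count) merely into $\mathrm{rank}_{\C}R_{01}\ge n_0$. Combined with your (correct) upper bound $\mathrm{rank}_{\C}R_{01}\le n_1-1$ and the multiplicity identity $n_1=2n_0$, one obtains $n_0\le 2n_0-1$, i.e.\ $n_0\ge 1$, which is exactly the hypothesis of the contradiction argument and yields nothing. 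Thus the three estimates do not close up.

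There are also two places you explicitly defer work: the verification of the kernel of $\bar B_{11}|_{V_1^{-i}}$ and the degenerate case $B_{11}=0$. So even on its own terms the argument is a sketch, not a proof.

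The paper's proof is much shorter and avoids all of this. It uses part (3) of Lemma~\ref{lemma-comm-1}, namely $[H_0,L_0]+2i[L_1,\bar L_1]=A_3$, restricted to the $V_0$-diagonal block. With $B_{10}=B_{20}=0$ the $[L_1,\bar L_1]$ contribution vanishes there, and $A_3|_{V_0}=S_0=0$, so one obtains $R_{01}\bar R_{01}^{t}-\bar R_{01}R_{01}^{t}=0$. Expanding via the $Q_2$-block form yields $\sum_l|q_{kl}^{(0)}|^2=0$ for every $k$, hence $R_{01}=0$. Then all of $A_1(\lambda),A_2(\lambda),A_3$ annihilate $V_0$, so $V_0$ is a trivial subrepresentation, contradicting fullness. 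This forces $n_0=0$ and the representation is irreducible by Corollary~\ref{cor-irr}. You never invoke relation (3), which is precisely the ingredient that makes the argument close.
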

\begin{proof} In this case, by (3) of Lemma \ref{lemma-comm-1}, we have
$R_{01}\bar R_{01}^t-\bar R_{01} R_{01}^t=0.$
As a consequence we have
$\sum_{l=1}^{n_0}|q_{kl}^{(0)}|^2=0, \hbox{ for all } k=1,\cdots, n_0.$
That is, $R_{01}=0$. So all  $A_1(\lambda)$, $A_2(\lambda)$ and $A_3$ are of the form
\[\left(
    \begin{array}{ccc}
      \cdots& 0 & \cdots \\
      0 & 0 & 0 \\
      \cdots & 0 & \cdots \\
    \end{array}
  \right).
\]
As a consequence, the representation is in a smaller space, contradicting  the fullness of the surface. Hence $A_3$ has no $S_0$ term, i.e., $A_3$ has exactly two $0-$eigenvalues and by Corollary \ref{cor-irr} the representation is irreducible.
\end{proof}

So we only need to show
\begin{lemma} $B_{20}=0$.
\end{lemma}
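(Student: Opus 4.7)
The plan is to combine the two commutation relations of Lemma~\ref{lemma-comm-1} that have not yet been exploited, namely (3) and (4), with the identities $Q_1Q_1^t=Q_2Q_2^t=0$, $e_0^te_0=0$, $Q_1\bar Q_2=\bar Q_1Q_2=0$, $\bar e_0^tQ_2=0$, $e_0^t\bar e_0=2$, and $\bar Q_1Q_2^t=2\bar Q_1$ satisfied by the matrices appearing in the previous lemma.

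First I would unpack $[\bar L_1,-L_0+iH_0]=0$ in block form. Since $-L_0+iH_0$ decomposes as $\hat R\oplus\tilde R$ (the upper-left $4\times 4$ block and the lower-right $n\times n$ tridiagonal block) while $\bar L_1$ is purely off-diagonal, the identity reduces to the intertwining relation $\bar B_1\tilde R=\hat R\bar B_1$. Restricted to the column block corresponding to $S_0$ and using $\bar B_{10}=0$, this yields $\hat R_{00}\bar B_{20}=-\bar B_{11}R_{01}^t$. Comparing the $2\times 2$ column blocks entry by entry forces $\bar b_{1k}(a+c)=0$ and $\bar{\hat b}_{1k}(a+c)=0$ for each $k$, so either $B_{20}=0$ (the desired conclusion) or $c=-a$.

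To rule out the residual case $c=-a$ (in which $\hat R_{00}=aQ_1$ and $\hat b_{1k}=-ib_{1k}$), I would combine two further computations. On the one hand, (4) applied to the column blocks $S_j$ for $j\geq 1$ collapses, via the identities above, into orthogonality conditions such as $\sum_l\bar c_{1l}q_{kl}^{(1)}=0$ that propagate up the $R_{j,j+1}$ tower. On the other hand, (3) restricted to the $(S_0,S_0)$ sub-block, combined with $R_{01}\bar R_{01}^t=2M\otimes Q_2$ (where $M_{kk'}=\sum_lq_{kl}^{(0)}\bar q_{k'l}^{(0)}$) and $\bar B_{20}^tB_{20}-B_{20}^t\bar B_{20}=2(\bar v^tv-v^t\bar v)$, forces $M_{kk'}=4\bar b_{1k}b_{1k'}$; hence $M$, and therefore $q^{(0)}$, has rank one. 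Together these produce a proper $(A_1(\lambda),A_2(\lambda),A_3)$-invariant subspace of $\mathbb{R}^{n+4}$, contradicting fullness.

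The main obstacle will be this final step: closing out the sub-case $c=-a$. The direct commutation relations do not immediately force $v=0$, so one must combine the propagating orthogonality conditions from (4), the rank-one Gram structure extracted from (3), and fullness to extract the contradiction. The delicate book-keeping of the indefinite signature matrix $I_{1,1}$ throughout this cascade is the most technical aspect of the argument.
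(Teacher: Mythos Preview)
Your first step is correct and matches the paper: from (4) of Lemma~\ref{lemma-comm-1} you extract the intertwining relation and, in the $S_0$-column block, obtain $\hat R_{00}\bar B_{20}=-\bar B_{11}R_{01}^{t}$, which forces $(a+c)\bar b_{1k}=(a+c)\bar{\hat b}_{1k}=0$. So you are right that the lemma reduces to excluding $c=-a$.

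The problem is your treatment of that residual case. You propose to combine higher-block orthogonality conditions from (4), a rank-one Gram analysis of $R_{01}$ from the $(S_0,S_0)$ sub-block of (3), and fullness to manufacture an invariant subspace. You yourself flag this as ``the main obstacle'' and concede that the commutation relations ``do not immediately force $v=0$''. As written this is a sketch, not a proof, and it is not clear the pieces actually assemble into a contradiction.

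The paper avoids this difficulty entirely by looking at a different block of (3). Instead of the $(S_0,S_0)$ block, read off the upper-left $4\times 4$ block of $[H_0,L_0]+2i[L_1,\bar L_1]=A_3$. A direct computation (using only the explicit forms of $\hat R_{00}$, $B_{11}$, $B_{20}$, $B_{22}$ already obtained) gives
\[
|c|^2-|a|^2 \;=\; 1+2\sum_j\bigl(|b_{1j}|^2+|\hat b_{1j}|^2+2|c_{1j}|^2\bigr)\;\geq\;1.
\]
In particular $|a|<|c|$, so $a+c\neq 0$, and the case you were trying to handle simply does not occur. The paper also invokes the Willmore isotropy condition $B_1^{t}I_{1,3}B_1=0$ (which you never use) to obtain $B_{22}=0$ beforehand; this streamlines the block computations leading to the equation above.

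So the gap is not in your overall strategy but in missing the one-line inequality that closes the argument. Replace your entire ``residual case'' discussion by the $4\times 4$ block computation of (3), and the proof is complete.
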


\begin{proof}Suppose $B_{20}\neq 0.$
Recall that  for  Willmore surfaces one has
$B_1^tI_{1,3}B_1=0$.  As a consequence
\[B_{22}^tB_{20}=\left( c_{1j}b_{1k} e_0
                 \right)_{1\leq j\leq n_2,1\leq k\leq n_0}
=0.\]
Since $B_{20}\neq 0$, one of the   $b_{1k}$ satisfies $b_{1k}\neq0$ and hence $c_{1j}=0$ for all $j$, whence, $B_{22}=0$.

Moreover, by (4) of Lemma \ref{cor-irr}, we have
\[  -\hat R_{00}^tI_{1,1}\bar{B}_{11}+\bar B_{20}R_{01}=0 \hbox{ and } \bar{B}_{11}R^t_{01}+\hat R_{00}\bar B_{20}=0.\]
The last equation reads
\begin{equation}\label{eq-ac}
\left(
    \begin{array}{cccccc}
    \sum_{j=1}^{n_1}2a_{1j}q_{1j}^{(0)}Q_1 &\cdots &\sum_{j=1}^{n_1}2a_{1j}q_{n_0,j}^{(0)}Q_1 \\
    \end{array}
  \right)+2\left(
            \begin{array}{cccccc}
              a\bar{b}_{11} & a\bar{\hat b}_{11} & \cdots &  a\bar{b}_{1,n_0} & a\bar{\hat b}_{1,n_0}  \\
              c\bar{b}_{11} & c\bar{\hat b}_{11} & \cdots &  c\bar{b}_{1,n_0} & c\bar{\hat b}_{1,n_0}  \\
            \end{array}
          \right)=0.
\end{equation}
Next we claim that $|a|<|c|$ holds. As a consequence, \eqref{eq-ac} can not holds, since one of the  $b_{1k}$ satisfies $b_{1k}\neq0$.

 Now let's prove the claim.
 Consider the left upper  $4\times 4$ diagonal block of the equation (3) of Lemma \ref{lemma-comm-1}: $[H_0, L_0]+2i[L_1,\bar L_1]=A_3$.
We obtain that
\[|c|^2-|a|^2=1+2\sum_j(|b_{1j}|^2+|\hat b_{1j}|^2+2|c_{1j}|^2),\]
from which we see that $|c|^2-|a|^2\geq1>0$.

The above contradiction indicates $B_{20}=0$, which finishes the proof.

\end{proof}

{\em Proof of Proposition \ref{prop-minimal}:} From the proof of the above lemma we see that $|c|>|a|$ and $a/c\in \R$. Consider the hyperbolic rotation
$T_t=\hbox{diag}\{T_{1t},I\}\ \hbox{ with }
T_{1t}=\left(
    \begin{array}{ccccc}
      \cosh t& \sinh t  \\
      \sinh t & \cosh t \\
    \end{array}
  \right)$ and $a\cosh t+c\sinh t=0$. The new representation  $T_1A_jT_1^{-1}$ has the same form as $A_j$ except that now the new $\tilde a=0$. So w.l.g. we can assume that $a=0$. Now we see that all of $A_1(\lambda)$, $A_2(\lambda)$ and $A_3$ take values in $\mathfrak{so}(n+3)$ and as a consequence, the normalized potential \eqref{normpot} takes values in $\mathfrak{so}(n+3)$. By Theorem 1.1 of \cite{Wang;minimal}, $y$  is a
  minimal surface in $S^{n+2}$.\hfill$\Box$

\subsection{On homongeneous Willmore $\R P^2$}
Finally we consider the  case where a homogeneous  Willmore immersion from the two-sphere descends to a map defined on $\R P^2$, i.e., where the Willmore two-sphere is invariant under the fixed point free antiholomorphic involution  $\mu(z)=-\frac{1}{\bar z}$.
By \eqref{trafoy}, we obtain
\begin{equation} \label{trafoy}
y(\mu(z))=y(-e^{i \theta}\frac{1}{r}) = e^{-\theta A_3} e^{2\arctan(\frac{1}{r}) A_2} .y(0)=
 e^{-\theta A_3} e^{-2\arctan(r) A_2} e^{\pi A_2}.y(0).
\end{equation}
Here we have used the fact that $\arctan(\frac{1}{r})+\arctan(r)=\frac{\pi}{2}$.

Let's consider next the action $e^{\pi A_2}.y(0)$. It is a rotation of $y(0)$ by degree $m\pi$. Here $m$ is the degree of the representation and hence half of the dimension of $S^{2m}$ if $y$ is full in $S^{2m}$. Obviously, $e^{\pi A_2}.y(0)=y(0)$ if $m=2\tilde{m}$ and $e^{\pi A_2}.y(0)= - y(0)$ if  $m=2\tilde{m}+1$.

If $e^{\pi A_2}.y(0)=y(0)$, then we have
$y(\mu(z))=y(z).$
If $e^{\pi A_2}.y(0)=-y(0)$, we have
$y(\mu(z))=-y(z).$

Summing up we obtain:

\begin{corollary}Let $y$ be a homogeneous Willmore two-sphere.
\begin{enumerate}
\item If $y$ is full in $S^{4\tilde m}$, it descends to a Willmore immersion from $\R P^2$.
\item The full homogeneous Willmore two-sphere in $S^{4\tilde m-2}$  has an antipodal symmetry \[y(-\frac{1}{\bar z})=-y(z).\]
\end{enumerate}
\end{corollary}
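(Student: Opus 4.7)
The plan is to evaluate $y(\mu(z))$ directly using the orbit formula \eqref{trafoy} and compare it with $y(z)$. Writing $z = re^{i\theta}$ and noting $\mu(z) = -e^{i\theta}/r = e^{i\theta}\cdot(-1/r)$, substitution into \eqref{trafoy} (together with $\arctan(-1/r) = -\arctan(1/r)$) gives
\[y(\mu(z)) = e^{-\theta A_3}\, e^{2\arctan(1/r)\, A_2}\, .\, y(0).\]
Next I would apply the elementary identity $\arctan(1/r) + \arctan(r) = \pi/2$ (valid for $r>0$) to factor $e^{2\arctan(1/r)A_2} = e^{\pi A_2}\,e^{-2\arctan(r)A_2}$, where the two exponentials commute since both are generated by $A_2$. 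Combining with \eqref{trafoy} this yields
\[y(\mu(z)) = e^{-\theta A_3}\, e^{-2\arctan(r) A_2}\, \bigl(e^{\pi A_2}\, .\, y(0)\bigr),\]
so that if $e^{\pi A_2}.y(0) = \varepsilon\, y(0)$ with $\varepsilon \in \{\pm 1\}$, we immediately deduce $y(\mu(z)) = \varepsilon\, y(z)$.

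The remaining task is purely algebraic: determine the sign $\varepsilon$. By the irreducibility result established in the previous subsection, the ambient $SO(3)$-module is the unique $(2m+1)$-dimensional irreducible representation of $SO(3)$, where $2m = \dim S^{2m}$ is the dimension of the target sphere. The equivariance $y(e^{tT_3}.0) = y(0)$, which holds because $T_3$ fixes $0\in\C$, forces $A_3.y(0) = 0$, so $y(0)$ lies in the (one-dimensional) zero-weight space of this representation. A standard spherical-harmonics computation then shows that in the spin-$m$ irreducible representation of $SO(3)$, the half-period rotation $e^{\pi A_2}$ acts on the zero-weight vector as $(-1)^m$, giving $\varepsilon = (-1)^m$.

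The two cases of the corollary now drop out by parity. When $y$ is full in $S^{4\tilde m}$ we have $m = 2\tilde m$ even, hence $\varepsilon = +1$ and $y(\mu(z)) = y(z)$; consequently $y$ descends to a well-defined Willmore immersion from $\R P^2 = S^2/\mu$. When $y$ is full in $S^{4\tilde m - 2}$ we have $m = 2\tilde m - 1$ odd, hence $\varepsilon = -1$ and we obtain the antipodal relation $y(\mu(z)) = -y(z)$. The main delicacy I foresee is establishing the $(-1)^m$ sign cleanly; while this is classical via the Legendre-polynomial description of the zero-weight vector of the spin-$m$ representation, one can alternatively verify it by direct block computation using the explicit form of $A_2$ derived from \eqref{eq-A1-A2} and the lemmas of the previous subsection, thereby keeping the argument self-contained.
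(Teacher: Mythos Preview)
Your argument is correct and follows essentially the same route as the paper: both compute $y(\mu(z))$ from \eqref{trafoy}, use $\arctan(r)+\arctan(1/r)=\pi/2$ to reduce the question to the action of $e^{\pi A_2}$ on $y(0)$, and then invoke the parity of $m$ to read off the sign. Your justification of the sign $(-1)^m$ via the zero-weight vector in the spin-$m$ representation (equivalently, the parity of the Legendre polynomial $P_m$) is in fact more explicit than the paper's, which simply asserts that $e^{\pi A_2}.y(0)$ is ``a rotation of $y(0)$ by degree $m\pi$'' and calls the outcome obvious; the paper's accompanying remark that this ``can also be read off from the spherical harmonics functions'' is exactly the mechanism you spell out.
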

\begin{remark} This can also be read off from the spherical harmonics functions. If the degree of these function is even, then the surface reduces to an immersion from $\R P^2$. Otherwise the surface has an antipodal symmetry as above. We believe this result is well-known to experts. But so far we do not have a reference in literature.
\end{remark}

\def\refname{References}

\end{document}